\documentclass[12pt]{amsart}
\usepackage{amsmath}
\usepackage{amssymb}
\usepackage{amsthm}
\usepackage{epsfig,graphicx}
\usepackage[arrow,matrix,tips,2cell]{xy}
\newcommand{\ignore}[1]{\relax}

\newcommand{\Def}{\operatorname{Def}}
\numberwithin{equation}{section}

\newcommand{\C}{\mathbb C}
\newcommand{\R}{\mathbb R}
\newcommand{\Z}{\mathbb Z}

\newcommand{\val}{\operatorname{val}}

\newcommand{\deff}{\operatorname{def}}

%^{\operatorname{signed}}}

%\newtheorem{thm}{Theorem}

\newtheorem{lem}{Lemma}[section]
\newtheorem{claim}{Claim}

\newtheorem{cor}[lem]{Corollary}

\newtheorem{theorem}{Theorem}
\newtheorem{corollary}[theorem]{Corollary}

\newtheorem{prop}[lem]{Proposition}
\newtheorem{add}[lem]{Addendum}

\theoremstyle{definition}
\newtheorem{defn}[lem]{Definition}

\newtheorem{exa}[lem]{Example}

\theoremstyle{remark}
\newtheorem{rmk}[lem]{Remark}
\newtheorem{rem}[lem]{Remark}

\newcommand{\tordva}{(\C^\times)^{2}}

\newcommand{\rtordva}{(\R^\times)^{2}}

\newcommand{\ev}{\operatorname{ev}}

\newcommand{\conj}{\operatorname{conj}}

\newcommand{\dd}{\partial}

\newcommand{\cp}{{\mathbb C}{\mathbb P}}
\newcommand{\rp}{{\mathbb R}{\mathbb P}}

\newcommand{\pp}{{\mathbb P}}

\renewcommand{\setminus}{\smallsetminus}

\newcommand{\MM}{{\mathcal M}}

\newcommand{\Area}{\operatorname{Area}}

\ignore{
\newtheorem{theorem}{Theorem}

\newenvironment{proof}[1][Proof]{\noindent\textbf{#1.} }{\ \rule{0.5em}{0.5em}}
}

\newcommand{\X}{\mathcal X}
\newcommand{\Y}{\mathcal Y}
\newcommand{\simple}{\operatorname{simple}}

\renewcommand{\S}{\mathcal S}
\newcommand{\ctor}{(\C^\times)^2}
\usepackage{cite}

\begin{document}

\title{On Block-G\"ottsche multiplicities for planar tropical curves.}
\author{Ilia Itenberg and Grigory Mikhalkin}
\address{Universit\'e Pierre et Marie Curie and
Institut Universitaire de France\\
Institut de Math\'ematiques de Jussieu\\
4 place Jussieu\\
75005 Paris, France}
\email{itenberg@math.jussieu.fr}
\address{Universit\'e de Gen\`eve\\
Math\'ematiques, villa Battelle\\
7, route de Drize\\
1227 Carouge, Switzerland}
\email{grigory.mikhalkin@unige.ch}
\begin{abstract}
We prove invariance for the number of planar tropical curves enhanced with
polynomial multiplicities recently proposed by Florian Block and Lothar G\"ottsche.
This invariance has a number of implications in tropical enumerative geometry.
\end{abstract}
\thanks{Research is supported in part by
ANR-09-BLAN-0039-01 grant of {\it Agence Nationale de la
Recherche}
%and FRG 0854989 grant of the National Science Foundation USA
(I.I.),
by the TROPGEO project of the European Research Council and
by the Swiss National Science Foundation  grants 125070 and 126817 (G.M.).
%as well as by FRG 0854989/0854977 grants of the National Science Foundation USA (I.I. and G.M.)
%The authors are members of FRG Collaborative Research "Mirror
%Symmetry \& Tropical Geometry" (Award No. $0854989$).
}

\maketitle

\section{Introduction}

\subsection{Some motivations}
One of the most classical problem in enumerative geometry is computing the number
of curves of given degree $d>0$ and genus $g\ge 0$ that pass through the appropriate number (equal to $3d-1+g$)
of generic points in the projective plane $\pp^2$.
This problem admits more than one way for interpretation. 
%(cf. e.g. a non-integer number
%of ways to treat generalizations of this problem described in \cite{PT}).
The easiest and the most well-studied interpretation is provided by the framework of complex geometry.
If we take a generic configuration of $3d-1+g$ points in $\cp^2$ the number of curves will only
depend on $d$ and $g$ and not on the choice of points as long as this choice is generic.
E.g. for $d=3$ and $g=0$ we always have $12$ such curves. 
For any given $g$ and $d$ the number   can be computed
e.g. with the help of the recursive relations of Caporaso-Harris \cite{CH}.

In this paper we are interested in setting up rather than solving plane enumerative problems.
In the world of complex geometry such set up is tautological: all relevant complex curves
are treated equally and each contributes 1 to the number we are looking for.
(Note that in this case all these complex curves 
are immersed and have only simple nodes as their self-intersection points.)

A somewhat less well-studied problem appears in the framework of real geometry. For
the same $d$ and $g$ but different choices of generic configurations of
$3d-1+g$ points the corresponding numbers of real curves can be different.
E.g. for $d=3$ and $g=0$ we may have $8$, $10$ or $12$ curves
depending on the choice of points (see~\cite{DKh}). 
It was suggested by Jean-Yves Welschinger \cite{W} 
to treat real curves differently for enumeration,
so that some real curves are counted with multiplicity $+1$ and some with multiplicity $-1$.
He has shown that the result is invariant on the choice of generic points if $g=0$.
E.g. for the $d=3$, $g=0$ case we always have 8 real curves counted with the Welschinger multiplicity.
This number may appear as 8 positive curves,
1 negative and 9 positive curves, or 2 negative and 10 positive curves.

Tropical enumerative geometry encorporates   features of both, real and complex geometry.
If we fix $3d-1+g$ generic points in the tropical projective plane then the corresponding number
of tropical curves of degree $d$ and genus $g$ can be different.
%E.g. if $d=3$ and $g=0$
%we may have 9 or 10 tropical curves.
Nevertheless, tropical curves may also be prescribed multiplicities in such a way that
the resulting number is invariant.

So far two such recipes were known 
%previously 
(see \cite{Mik05}): one recovering the number of curves 
for the complex problem and one recovering the number of curves for the real problem
enhanced with multiplicities corresponding to the Welschinger numbers.
Note that the real problem is only well-defined (and thus invariant) for the case of $g=0$, but the corresponding tropical
real problem is well-defined for arbitrary $g$, see \cite{IKS}.
%This means that different tropical curves may be counted with different

Recently, a new type of multiplicity for tropical curves were proposed by Florian Block and Lothar G\"ottsche, \cite{BG}. 
These multiplicities are symmetric Laurent polynomials in one variable with positive integer coefficients.
According to the authors of this paper, which should appear soon, 
their motivation came from 
%computing 
a Caporaso-Harris type calculation of 
the refined Severi degrees 
%proposed 
(introduced %earlier 
by G\"ottsche in connection with~\cite{KST})    
that interpolate
between the numbers of complex and real curves, see~\cite{GS}.   
Accordingly, their multiplicity for tropical curves interpolate between the complex and real multiplicities for tropical
curves: the value of the polynomial at 1 is the complex multiplicity while the value at $-1$ is the real multiplicity.

In this paper we show that the Block-G\"ottsche multiplicity is invariant of the choice of generic tropical configuration
of points and thus provides a new way for enumeration of curves in the tropical plane,
not unlike quantizing the usual enumeration of curves by integer numbers. E.g. if $d=3$ and $g=0$ then
the corresponding number is $y+10+y^{-1}$ that can come from eight curves with multiplicity 1 and
one curve of multiplicity $y+2+y^{-1}$, but also may come from nine curves with multiplicity 1 and one
curve of multiplicity $y+1+y^{-1}$.
The polynomial number of $y+10+y^{-1}$ curves can be thought of as 12 curves from complex enumeration,
but now this number decomposes according to different states: 10 ``curves" are in the  ground state, 
one ``curve" is excited in a $y$-state, while one ``curve" is excited in a $y^{-1}$-state. Here we use
quotation marks for curves as several of such virtual ``curves" correspond to the same tropical curve
(e.g. we have one tropical curve of multiplicity $y+1+y^{-1}$, but it correspond to three virtual ``curves" in different states).

Our considerations are not limited by curves in the projective planes and include enumeration in all toric
surfaces. As the configuration of tropical points is assumed to be generic, we may restrict our attention
to $\R^2$ (a tropical counterpart of $(\C^\times)^2$) which is dense in any tropical toric surface.
The corresponding toric degree is then given by a collection of integer vectors whose sum is zero.

\subsection{Tropical curves immersed in the plane}
A closed irreducible tropical curve $\bar C$ (cf. \cite{Mik06}, \cite{MZ} et al.) is a connected finite graph without 2-valent vertices
whose edges are enhanced with lengths.
%(positive real numbers).
The length of any edge which is not adjacent to a $1$-valent vertex
is a positive real number.
Any edge adjacent to a $1$-valent vertex is required to
have infinite length. Denote the set of 1-valent vertices of $\bar C$ with $\dd\bar C$. The lengths of the edges
induce a complete inner metric on the complement
\begin{equation}\label{opencurve}
C=\bar C\setminus\dd \bar C.
\end{equation}
A metric space $C$ is called an {\em open minimal tropical curve} if it can be presented by $\eqref{opencurve}$
for some closed irreducible tropical curve $\bar C$.

The number $\dim H_1(C; \R)$ of independent cycles
in $C$ is called
the {\em genus} of the curve $C$.
\begin{defn}[cf.  \cite{Mik05}]\label{def1.1}
An {\em immersed planar tropical curve}
is a smooth map $h:C\to\R^2$ (in the sense that it is continuous map whose restriction to any open edge is
a smooth map between differentiable manifolds), subject to the following properties.
\begin{itemize}
\item The map $h$ is a topological immersion.
\item For every unit vector $u\in T_y(C)$, where $y$ is inside an edge $E\subset C$, we have $(dh)_y(u)\in\Z^2$.
By smoothness, the image $(dh)_y(u)$ must be constant on the whole edge $E$ as long as we enhance
$E$ with an orientation to specify the direction of the unit vector. We denote $(dh)_y(u)$ with $u_h(E)$.
The GCD of the (integer) coordinates of $u_h(E)$ is called the {\em weight} $w_h(E)$ of the edge $E$.
\item For every vertex $v\in C$ we have $\sum\limits_E u_h(E)=0$,
where the sum is taken over all edges adjacent to $v$ and oriented away from $v$.
This condition is known as the {\em balancing condition}.
\end{itemize}
Recall that a continuous map is called {\em proper} if the inverse image of any compact is compact.
A proper immersed
tropical curve $h: C \to \R^2$ is called {\em simple} (see \cite{Mik05}) if it is $3$-valent,
the self-intersection points of $h$ are disjoint from vertices,
and the inverse image under $h$ of any self-intersection point
consists of two points of $C$.
\end{defn}

\begin{rmk}
Definition of tropical morphism which is not required
to be an immersion, or to spaces other than $\R^2$,
requires additional conditions which we do not treat here as we do not need them.
\end{rmk}

By Corollary 2.24 of \cite{Mik05} any simple tropical curve $h:C\to\R^2$
locally varies in a $(\varkappa+g-1)$-dimensional
%vector
affine space $\Def(h)$,
%where $x$ %is the number of ends of $C$.
where $g$ is the genus of $C$ and $\varkappa$ is the number
of infinite edges of $C$.
This space has natural
coordinates once we choose a vertex $v\in C$. The two of those coordinates are given by $h(v)\in\R^2$.
The lengths of all closed edges of $C$ give $\varkappa+3g-3$ coordinates (since $h$ is simple the curve $C$ is 3-valent).
Then we have $2g$ linear relations
(defined over $\Z$) among these lengths
as each cycle of $h(C)$ must close up in $\R^2$.
By Proposition 2.23 of \cite{Mik05} these relations are independent.

%\ignore{
Thus the space $\Def(h)$ is an open set in a $(\varkappa+g-1)$-dimensional
affine subspace $U\subset \R^{2+\varkappa+3g-3}$. The slope of this affine subspace is integer in the sense
that there exist $(\varkappa+g-1)$ linearly independent  vectors in $\Z^{2+\varkappa+3g-3}$ parallel to $U$.
This enhances the tangent space $T_h(U)$ to $\Def(h)$ at $h$ with integer lattice and
%thus
hence
a volume element (defined up to sign).
%}

\subsection{Lattice polygons and points in general position}
Let $\Delta$ be a finite collection of non-zero vectors
with integer coordinates in $\R^2$ such that
the vectors
of $\Delta$ generate $\R^2$ and the sum
of these vectors is equal to $0$. We call such a collection
{\em balanced}.
The balanced collection $\Delta$ defines a {\em lattice polygon}
(that is, a convex polygon with integer vertices
and non-empty interior)
$\Delta^* \subset (\R^2)^*$
in the dual vector space $(\R^2)^*$ to $\R^2$:
each side $s$ of $\Delta^*$ is orthogonal to
a certain vector $v \in \Delta$ so that
$v$ is an outward normal to $\Delta^*$
(we say that such a vector $v$ is {\em dual} to $s$);
the {\it integer length} $\#(s \cap \ \Z^2) - 1$
of the side $s$ is equal to
the GCD of the two coordinates
of the sum of all the vectors in $\Delta$
which are dual to $s$.
The collection $\Delta$ defines a lattice polygon $\Delta^*$
uniquely up to translation.
%If the interior of $\Delta^*$ is non-empty,
%we say that $\Delta$ is {\em non-degenerate}.
Denote by $\varkappa(\Delta)$ the number of vectors in $\Delta$,
and denote by $\varkappa(\Delta^*)$
the {\em perimeter} $\#(\partial\Delta^* \cap \ \Z^2)$
of $\Delta^*$. Clearly, we have $\varkappa(\Delta)\le \varkappa(\Delta^*)$.

In general, if a lattice polygon $\Delta^*$ is fixed, the collection
$\Delta$ cannot be restored uniquely. However, if we assume
that all the vectors of $\Delta$ are {\em primitives}
(that is, the GCD of the coordinates of each vector is $1$ or, alternatively $\varkappa(\Delta)=\varkappa(\Delta^*)$),
then $\Delta^*$ defines $\Delta$ in a unique way.
A balanced collection $\Delta \subset \R^2$ is called {\em primitive}
if all the vectors of $\Delta$ are primitive.

%Each side $\Delta'\subset\Delta$ is parallel to a 1-dimensional subspace
%of $(\R^2)^*$ defined over $\Z$ and thus gives a primitive vector $u(\Delta')\in\Z^2$ up to a sign.
%The sign of this vector is determined by the outward normal to $\Delta$.
%The number $x=\#(\dd\Delta\cap\Z^2)$ is called {\em the perimeter}
%of the lattice polygon $\Delta$.

We say that an immersed planar tropical curve $h:C\to\R^2$
is {\em of degree} $\Delta$ if
%$C$ has $x$
%ends and
the multiset $\{u_h(E)\}$, where $E$ runs
over the unbounded edges $E \subset C$ oriented towards infinity,
coincides with $\Delta$.
%the multiset $\{u(\Delta)\}$ runs over
%all sides $\Delta\subset\Delta$, where
%each side contributed $\#(\Delta'\cap\Z^2)-1$ elements to the multiset.
Denote with $\MM_{g,\Delta}^{\simple}$ the space
of all simple tropical curves of degree $\Delta$
and genus $g$. As we saw, it is a disjoint union
of open convex sets in $\R^{\varkappa(\Delta) + g - 1}$ enhanced
with the a canonical choice of the integer lattice in its tangent space.

Recall (cf. Definition 4.7 of \cite{Mik05}) that
a configuration
$$\X=\{p_1, \dots, p_k\}\subset\R^2$$
is called
{\em generic} if for any
%lattice polygon
balanced collection $\tilde\Delta \subset \R^2$
%$\tilde\Delta\subset  (\R^2)^*$
and any non-negative integer number $\tilde g$
the following conditions hold.
\begin{itemize}
\item If $\varkappa(\tilde\Delta) + \tilde{g} - 1 = k$, then
any immersed tropical curve of genus $\tilde{g}$ and degree
$\tilde\Delta$ passing through $\X$ is simple
and its vertices are disjoint from $\X$.
The number of such curves is finite.
\item If $\varkappa(\tilde\Delta)+\tilde{g}-1<k$ there are
no immersed tropical curve of genus $\tilde{g}$ and degree
$\tilde\Delta$ passing through $\X$.
\end{itemize}
%Here $\tilde{x}=\#(\dd\tilde\Delta\cap\Z^2)$
%is the perimeter of $\tilde\Delta$.
Proposition 4.11 of \cite{Mik05} ensures
that the set of generic configurations of
$k$ points in $\R^2$ are open and
everywhere dense in the space
of all configurations of $k$ points in $\R^2$.

\subsection{Tropical enumeration of real and complex curves}
Let us fix a
%lattice polygon
primitive balanced collection $\Delta\subset\R^2$
%of perimeter $x$,
and an integer number $g\ge 0$.
For any generic configuration $\X=\{p_1,\dots,p_{k}\}\subset\R^2$ of
$k = \varkappa(\Delta) + g - 1$
points,
%Let us denote
denote with $\S(g,\Delta,\X)$
the set of all curves of genus $g$ and degree $\Delta$
which pass through $\X$.

For any generic choice of $\X$ the set  $\S(g,\Delta,\X)$ is finite. Nevertheless, it might contain
different number of elements. Example 4.14 of \cite{Mik05} produces two choices $\X_1$ and $\X_2$ for
a generic configuration of three points in $\R^2$ such that
$\#(\S(0,\Delta,\X_1))=3$ but $\#(\S(0,\Delta,\X_2))=2$ for
%the convex %quadrilateral
%$\Delta$ with vertices $(0,0)$, $(1,0)$, $(0,1)$ and $(2,2)$.
the primitive balanced collection
$\Delta = \{(-1, 0), (0, -1), (2, -1), (-1, 2)\}$.

One can associate multiplicities $\mu$ to simple planar tropical curves so that
\begin{equation}\label{sum-multi}
\sum\limits_{h\in\S(g,\Delta,\X)}\mu(h)
\end{equation}
depends only on $g$ and $\Delta$ and {\em not} on the choice of a generic
configuration of $\varkappa(\Delta) + g - 1$ points $\X\subset\R^2$.

Previously there were known two ways to introduce such multiplicity:
the {\em complex multiplicity $\mu_\C$} and the {\em real multiplicity $\mu_\R$}
that will be defined in the next section.
The first multiplicity is a positive integer number while the second one is an integer number
which can be both positive and negative as well as zero.

These multiplicities were introduced in \cite{Mik05}. It was shown there
that the expression \eqref{sum-multi} for $\mu_\C$ adds up
to the number of complex curves of genus $g$
which are defined by polynomial with the Newton
polygon $\Delta^*$ and pass
through a generic configuration of $\varkappa(\Delta) + g - 1$ points in $\ctor$.
In the complex world this number clearly does not depend on the choice of the generic
configuration. If $\Delta^*$ is a triangle
with vertices $(0,0)$, $(d,0)$ and $(0,d)$, this
number coincides with the number of projective curves of genus $g$ and degree $d$
through $3d+g-1$ points (also known as one of the {\em Gromov-Witten numbers} of $\cp^2$,
cf. \cite{KM}).

For the real multiplicity we have a less well-studied situation.
%Jean-Yves 
Wel\-schinger \cite{W} proposed
to prescribe signs (multiplicities $\pm 1$) 
to rational real algebraic curves in $\rp^2$ (as well as
in other real Del Pezzo
surfaces).
A generic immersed algebraic curve $\R C$ in $\rp^2$ is {\em nodal}.
This means that the only singularities of $\R C$ are {\em Morse singularities},
i.e. the curve can be given (in local coordinates $x,y$ near a singular point) by
equation
\begin{equation}\label{morse}
y^2\pm x^2=0.
\end{equation}

If the $\pm$ sign in \eqref{morse} is $+$ (respectively, $-$),
then the nodal point
is called {\em elliptic} (respectively, {\em hyperbolic}).
The {\em Welschinger sign} of $\R C$
is the product of the signs at all nodal points of $\R C$.
It was shown in \cite{W} that the number
of rational real curves passing
through a generic configuration of $3d-1$ points
and enhanced with these signs
does not depend on the choice of configuration.

It has to be noted that
Welschinger's recipe works only for rational (genus 0) curves. While his %multiplicities
signs make
perfect sense for real curves in any genus, the corresponding algebraic number
of curves is {\em not} an invariant if $g>0$
(see~\cite{IKS1}).
%We review Welschinger's multiplicities
%in the next section.

In \cite{Mik05} it was shown that the expression \eqref{sum-multi} for $\mu_\R$ adds up
to the number of real curves of genus $g$
%given by a
which are defined by
polynomials with the Newton
polygon $\Delta^*$,
pass through {\em some} generic configuration
of $\varkappa(\Delta) + g - 1$ points in $(\R^\times)^2$,
and are counted with Welschinger's signs.
In the case when $g=0$ and $\Delta^*$ corresponds
to a Del Pezzo surface (e.g. $\Delta^*$ is a triangle with vertices $(0,0)$, $(d,0)$ and $(0,d)$,
corresponding to the projective plane) this result is independent of the choice of
configuration in
%$\rp^2$
$(\R^\times)^2$
by \cite{W}.

It was found in \cite{IKS} that the expression \eqref{sum-multi} is invariant of the choice
of generic {\em tropical} configuration $\X$
%of $x+g-1$ points in $\R^2$
for all $g$ and $\Delta$,
even in the cases when the corresponding Welschinger number of real curves is known to be
not invariant. This gives us well-defined tropical Welschinger numbers in situations when the classical
Welschinger numbers are not defined, see \cite{Mik-mfo} for an explanation of this phenomenon.

%Recently Lothar G\"ottsche proposed new multiplicities 
%for planar tropical curves.
%\footnote{In the case of floor diagrams the same multiplicities 
%were also independently introduced 
%by Florian Block.}
%These 
The multiplicities
proposed by Block and G\"ottsche 
take values in (Laurent) polynomials in one formal variable with positive integer coefficients.
%and are motivated by Hilbert schemes considered in \cite{KST}.
Both $\mu_\C$ and $\mu_\R$ are incorporated in these polynomials
and can be obtained
as their values at certain points.
In the same Block-G\"ottsche multiplicities contain further information. 

In this paper we show that the sum \eqref{sum-multi} of tropical curves enhanced
with the Block-G\"ottsche multiplicities (defined in the next section) is independent of the choice of tropical configuration $\X$. 
In particular, coefficients of this sum at different powers of the formal variable
produce an infinite series of integer-valued invariants of tropical curves complementing
the tropical Gromov-Witten number and the tropical Welschinger number.

We  thank Florian Block, Erwan Brugall\'e and Lothar G\"ottsche for stimulating discussions.

\section{Multiplicities associated to
%classical and
simple tropical curves in the plane}
\subsection{Definitions}
Let $h: C \to \R^2$ be a properly immersed tropical curve and $V\in C$
is a vertex.
Recall that we denote the dual vector space $Hom_{\R}(\R^2,\R)$
of $\R^2$ with $(\R^2)^*$.

\begin{defn}\label{dual-polygon}
A lattice polygon $$\Delta(V)\subset(\R^2)^*\approx\R^2$$ is called {\em dual} to $V$
if
\begin{itemize}
\item its sides $\Delta_j\subset\dd\Delta(V)$ are parallel to the annihilators
of the vectors $u_h(E_j)$ viewed as linear maps $(\R^2)^*\to\R$ and
\item the integer length $\#(\Delta_j\cap\Z^2)-1$ of $\Delta_j$ %(viewed as a closed interval in $\R^2$)
coincides with the GCD of the coordinates of $u_h(E_j)\in\Z^2$.
\end{itemize}
%whose vertices $v_1$, $v_2$, $v_3$ satisfy
%$$
%\overrightarrow{v_2v_3} = u_h(E_1), \;\;\;
%\overrightarrow{v_3v_1} = u_h(E_2), \;\;\;
%\overrightarrow{v_1v_2} = u_h(E_3),
%$$
Here $E_j$ are edges adjacent to $V$ and oriented away from $V$ (the balancing condition
in Definition~\ref{def1.1} guaranties the existence
of such a
%triangle)
polygon)
and $j$ is the index that runs from 1 to the valence of $V$.
\end{defn}

If the immersed tropical curve
$h: C \to \R^2$ is of degree $\Delta$,
then the dual polygons $\Delta(V)$ for all vertices
of $C$ can be placed together in $\Delta^*$
in such a way that they become parts of a certain subdivision $S_h$
of $\Delta^*$.
Each polygon of $S_h$ corresponds
either to a vertex of $C$, or to an intersection point
of images of edges of $C$.
The vertices of $S_h$ are in a one-to-one correspondence
with connected components of $\R^2 \setminus h(C)$.
The subdivision $S_h$
is called {\em dual subdivision} of $h$.

Suppose now that $h$ is simple.
Then every vertex $V$ is 3-valent and thus  $\Delta(V)$ is a triangle.
In this case, the dual subdivision $S_h$ consists
of triangles and parallelograms.

The dual triangle $\Delta(V)$ gives rise to two quantities:
the lattice area $m_\C(V)$ of $\Delta(V)$
and the number $int(V)$ of interior integer points of $\Delta(V)$.
Put $m_\R(V)$ to be equal to $0$ if $m_\C(V)$ is even,
and equal to $(-1)^{int(V)}$ otherwise.
As suggested by Block and G\"ottsche~\cite{BG}, we consider the expression 
\begin{equation}\label{GV}
G_V(y)=\frac{y^{m_\C(V)/2}
- y^{-m_\C(V)/2}}{y^{1/2} - y^{-1/2}}=y^{\frac{m_\C(V)-1}2}+%y^{\frac{m_\C(V)-3}2}+
\dots+y^{\frac{1-m_\C(V)}2}.
\end{equation}
Note that $G_V(1)=m_\C(V)$ and
%$G_V(-1)=m_\R(V)$.
%the value of $G_V$ at $y = 1$ is equal
%to $m_\C(V)$.
%The value of
$G_V(-1)$
is equal to $0$ if $m_\C(V)$ is even, and equal
to $(-1)^{(m_\C(V) - 1)/2}$ if $m_\C(V)$ is odd.

\begin{defn}[\cite{Mik05}]
The numbers
$$
\mu_\C(h) = \prod_V m_\C(V), \;\;\;
\mu_\R(h) = \prod_V m_\R(V),
$$
where each product is taken over all trivalent
vertices of $C$,
are called {\em complex} and {\em real} multiplicities of the simple tropical curve $h$.
\end{defn}

Following~\cite{BG}, we consider a new multiplicity for   $h:C\to\R^2$
\begin{equation}\label{go}
G_h = \prod_V G_V,
\end{equation}
where, once again, the product is
%again
taken over all trivalent
vertices of $C$.
%To get rid of half-integer powers it might be useful to make
%a substitution $s=y^{1/2}$. Then both $G_V$ and $G(h)$ become symmetric
%Laurent polynomials in $s$
We summarize basic simple properties of $G_h$ in the following proposition.
\begin{prop}
\noindent \begin{enumerate}
\item The Laurent polynomial $G_y$ with half-integer powers is symmetric: $G_h(y)=G_h(y^{-1})$.
\item All coefficients of $G_y$ are positive.
\item We have $G_h(1)=\mu_\C(h)$.
\item If the number of infinite edges $E\subset C$ with even weight $w_h(E)$ is even then
$G_h$ is a genuine polynomial, i.e. all powers of $y$ are integer.
Otherwise all powers of $y$ in $G_h$ are non-integer.
\item If all infinite edge of $E$ have odd weights and the number of infinite edges
$E\subset C$ with $w_h(E)\equiv 3\pmod 4$ is even then $G_h(-1)=\mu_\R(h)$.
\end{enumerate}
\end{prop}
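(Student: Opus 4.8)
The plan is to dispose of the five assertions in increasing order of difficulty, the first three being formal consequences of the closed form \eqref{GV} and the last two both resting on a single mod-$2$ analysis at the trivalent vertices of $C$. For (1), (2), (3) I would argue directly from \eqref{GV}: each $G_V$ is a sum of $m_\C(V)$ consecutive powers of $y$, with exponents symmetric about $0$ and all coefficients equal to $1$, so $G_V(y)=G_V(y^{-1})$ and $G_V$ has positive coefficients; both properties are inherited by the product $G_h=\prod_V G_V$. Assertion (3) is immediate from $G_V(1)=m_\C(V)$ and the definition $\mu_\C(h)=\prod_V m_\C(V)$.

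The heart of (4) is a parity dictionary between edge weights at a vertex $V$ and the parity of $m_\C(V)$. Writing $u_i=u_h(E_i)$ for the three edge vectors at $V$, one has $m_\C(V)=|\det(u_1,u_2)|$, and $u_h(E)=w_h(E)\tau_E$ with $\tau_E$ primitive shows that the reduction $\bar u_h(E)\in(\Z/2\Z)^2$ vanishes exactly when $w_h(E)$ is even. The balancing condition reads $\bar u_1+\bar u_2+\bar u_3=0$ in $(\Z/2\Z)^2$, and $m_\C(V)$ is odd precisely when $\bar u_1,\bar u_2$ are independent, i.e. when all three $\bar u_i$ are nonzero. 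If two of the $\bar u_i$ vanished the third would vanish as well, so the number of even-weight edges at $V$ is always $0$, $1$, or $3$ (never $2$); consequently $m_\C(V)$ is even iff an \emph{odd} number of edges at $V$ are even-weight. Since $G_V$ has integral exponents iff $m_\C(V)$ is odd and half-integral exponents otherwise, $G_h$ is a genuine polynomial iff $\#\{V:m_\C(V)\ \mathrm{even}\}$ is even. I would close by double counting the incidences between even-weight edges and their trivalent endpoints: each even-weight bounded edge is counted twice and each even-weight infinite edge once, so mod $2$ the total equals the number of even-weight infinite edges, while it also equals $\#\{V:m_\C(V)\ \mathrm{even}\}$ mod $2$. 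This is exactly (4).

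For (5) the hypothesis that all infinite edges have odd weight forces integral exponents by (4), so $G_h(-1)$ is well defined. Vertices with $m_\C(V)$ even contribute $0$ to both $G_h(-1)$ and to $\mu_\R(h)$, so I may assume every $m_\C(V)$ is odd, equivalently every edge weight is odd. Then $G_h(-1)=(-1)^{\sum_V(m_\C(V)-1)/2}$ and $\mu_\R(h)=(-1)^{\sum_V \mathrm{int}(V)}$, and it remains to compare the two exponents mod $2$. Pick's formula applied to $\Delta(V)$, whose sides have integer lengths equal to the three weights $w_1,w_2,w_3$, gives $m_\C(V)=2\,\mathrm{int}(V)+B(V)-2$ with $B(V)=w_1+w_2+w_3$, hence $(m_\C(V)-1)/2-\mathrm{int}(V)=\sum_i (w_i-1)/2$, and $(w_i-1)/2$ is odd exactly when $w_i\equiv 3\pmod 4$. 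The same double-counting argument as in (4) then shows $\sum_V\sum_i (w_i-1)/2$ is congruent mod $2$ to the number of infinite edges of weight $\equiv 3\pmod 4$, which is even by hypothesis. Therefore the two exponents agree mod $2$ and $G_h(-1)=\mu_\R(h)$.

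I expect the main obstacle to be the bookkeeping in the mod-$2$ vertex analysis: verifying that exactly two even-weight edges cannot meet at a trivalent vertex, and converting the vertex-by-vertex parities into statements about infinite edges via the incidence count, with the mod-$4$ refinement needed for (5). Everything else is a formal manipulation of \eqref{GV} or a direct application of Pick's theorem.
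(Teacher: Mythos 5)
Your proof is correct and follows the same route the paper intends: properties (1)--(3) read off directly from the product of factors of the form \eqref{GV}, property (4) from the mod-$2$ balancing analysis at trivalent vertices combined with the edge--vertex incidence count, and property (5) from Pick's formula applied to the dual triangles (the paper compresses all of this into one sentence with a reference to \cite{IKS}). Your write-up simply supplies the details the paper omits, and they all check out.
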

\begin{proof}
These properties hold since $G_h$ is a product of polynomials of the form \eqref{GV}.
The last property is an easy consequence
of Pick's formula, cf.~\cite{IKS}.
\end{proof}

\begin{cor}\label{simple ends}
If $h:C\to\R^2$ is a simple tropical curve such that all of its infinite edges have weight $1$, then
$G_h(y)$ is a symmetric Laurent polynomial with positive coefficients such that
$G_h(1)=\mu_\C(h)$ and $G_h(-1)=\mu_\R(h)$.
\end{cor}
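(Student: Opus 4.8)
The plan is to deduce this corollary directly from the preceding Proposition by specializing each of its five parts to the situation at hand. Throughout, the key observation is that the weight hypothesis asserts $w_h(E) = 1$ for every infinite edge $E \subset C$, and that the integer $1$ is simultaneously odd and congruent to $1$ modulo $4$. All four conclusions of the corollary will then fall out by checking that the (mild) parity hypotheses of the relevant parts of the Proposition are met.

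First I would establish the structural assertions. Since no infinite edge has even weight, the number of even-weight infinite edges is $0$, which is even; hence part (4) of the Proposition guarantees that every power of $y$ occurring in $G_h$ is an integer, so that $G_h$ is a genuine symmetric Laurent polynomial. The symmetry $G_h(y) = G_h(y^{-1})$ is part (1) and the positivity of the coefficients is part (2); neither of these in fact requires the weight hypothesis at all. This already yields the statement that $G_h(y)$ is a symmetric Laurent polynomial with positive coefficients.

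Next come the two evaluations. The identity $G_h(1) = \mu_\C(h)$ is precisely part (3) and holds with no restriction on the weights. For the remaining claim $G_h(-1) = \mu_\R(h)$, I would verify the two hypotheses of part (5). All infinite edges have weight $1$, hence odd, so the first hypothesis (all infinite edges have odd weight) is satisfied. Moreover $w_h(E) = 1 \not\equiv 3 \pmod 4$ for each infinite edge $E$, so the number of infinite edges with $w_h(E) \equiv 3 \pmod 4$ equals $0$, which is even, verifying the second hypothesis. Part (5) then gives $G_h(-1) = \mu_\R(h)$, completing the argument.

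I expect no genuine obstacle here, as the corollary is simply a clean specialization of the Proposition. The only point requiring (minimal) care is the bookkeeping of the parity and the modulo-$4$ conditions appearing in parts (4) and (5); both are satisfied trivially precisely because the uniform weight $1$ is at once odd and congruent to $1$ modulo $4$, so that the relevant counts of exceptional infinite edges vanish and are in particular even.
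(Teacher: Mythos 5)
Your proof is correct and is exactly the argument the paper intends: the corollary is stated as an immediate specialization of the five-part Proposition, and your verification that weight $1$ makes the parity hypotheses of parts (4) and (5) hold trivially (zero even-weight infinite edges, zero infinite edges of weight $\equiv 3 \pmod 4$) is precisely the required bookkeeping. Nothing is missing.
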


\subsection{Tropical invariance}
Once we have defined multiplicities of simple planar tropical curves we may
consider the number of all tropical curves
of genus $g$ and degree $\Delta$
through a generic configuration $\X$ of
$k = \varkappa(\Delta) + g - 1$ points in $\R^2$
counting each curve with the corresponding multiplicity as in \eqref{sum-multi}.
If the result does not depend on the choice of $\X$ we say that this sum is a {\em tropical invariant}.

As we have already mentioned in the introduction, two multiplicities $\mu_\C$ and $\mu_\R$ introduced in \cite{Mik05}
were known to produce tropical invariants.
The main theorem of this paper establishes such invariance for the Block-G\"ottsche multiplicities $G_h$. 
\begin{theorem}\label{main}
Let
$\Delta \subset \R^2$ be a balanced collection,
$g$ be a non-negative integer number
such that $g \leq \#(\Delta^\circ \cap \ \Z^2)$,
and
$\X\subset \R^2$ be a generic configuration of $k = \varkappa(\Delta) + g - 1$ points.
The sum
$$G(g,\Delta)(y)=\sum\limits_{h\in\S(g,\Delta,\X)}G_h(y)$$
is a symmetric Laurent polynomial in $y$ with positive integer coefficients.
This polynomial is independent on the choice of $\X$.

If $\Delta$
is primitive, we have $G(g,\Delta)(1)$ $=N^\C(g,\Delta)$,
%and %$G(g,\Delta)(-1)=N^\R(g,\Delta)$,
where $N^\C(g,\Delta)$ is the number of complex curves of genus $g$
and of %degree $\Delta$
Newton polygon $\Delta^*$
which pass through
%any
a generic configuration of $k$ points in $(\C^\times)^2$.
Furthermore, if $\Delta$ is primitive,
there exists a generic configuration $\X^\R$
of $k$ points in $(\R^\times)^2$
such that $G(g, \Delta)(-1) = N^\R(g, \Delta, \X^\R)$, where $N^\R(g, \Delta, \X^\R)$
is the number of real
curves of genus $g$ and of Newton polygon $\Delta^*$
which pass through the points of
$\X^\R$ and are counted with Welschinger's signs.
\end{theorem}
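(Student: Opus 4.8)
The plan is to prove invariance by the standard wall-crossing strategy of tropical enumerative geometry, reducing the global statement to a purely local computation at the codimension-one degenerations of the configuration $\X$. First I would observe that the space of generic configurations $\X\subset\R^2$ of $k=\varkappa(\Delta)+g-1$ points is connected (in fact its complement is a finite union of walls, the codimension-one strata where genericity fails). Thus, to show that $G(g,\Delta)(y)=\sum_{h\in\S(g,\Delta,\X)}G_h(y)$ is independent of $\X$, it suffices to show that $G(g,\Delta)(y)$ does not change as $\X$ crosses a single wall transversally. The walls are exactly the configurations for which a tropical curve through $\X$ fails to be simple in precisely one of the minimal ways: either a $4$-valent vertex appears, or a point $p_i$ passes through a vertex of the curve, or two self-intersection images coincide, etc. The combinatorial classification of these codimension-one degenerations is already available from \cite{Mik05} (it underlies the proofs of invariance for $\mu_\C$ and $\mu_\R$), so I would invoke that classification rather than reprove it.

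With the walls classified, the crux is the following local statement: for each wall type, the curves $h$ appearing on the two sides of the wall are organized into finite families that degenerate to the same non-simple curve $h_0$ on the wall, and one must check that $\sum G_h(y)$ taken over the family on one side equals $\sum G_h(y)$ taken over the family on the other side. The only genuinely nontrivial wall is the one where a four-valent vertex degenerates, since the other wall types either do not change the multiset of curves or change the multiplicity of a single curve in a way that is manifestly symmetric. At a four-valent vertex degeneration the dual picture in $\Delta^*$ is a quadrilateral that can be triangulated by either of its two diagonals; each diagonal gives a simple curve on one side of the wall, and a third configuration (the ``interior'' smoothings) may contribute additional curves. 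The key local identity is therefore a polynomial identity among the $G_V$-factors $\frac{y^{m/2}-y^{-m/2}}{y^{1/2}-y^{-1/2}}$ attached to the triangles of the two triangulations of the quadrilateral.

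Concretely, the main obstacle is to verify this local $G$-identity. If the four-valent vertex is dual to a quadrilateral $Q$ with the two triangulations producing triangles of lattice areas $m_1,m_2$ (one diagonal) versus $m_1',m_2'$ (the other diagonal), together with the ``parallelogram'' smoothing when it occurs, then the products of the corresponding Block--G\"ottsche factors must agree. I expect this to reduce, via the quantum-integer notation $[m]=\frac{y^{m/2}-y^{-m/2}}{y^{1/2}-y^{-1/2}}$, to an identity of the shape $[m_1]\,[m_2]=[m_1']\,[m_2']+(\text{contribution of the smoothing})$, where the lattice areas satisfy $m_1+m_2=m_1'+m_2'$ (the total area of $Q$ is preserved). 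The required identity is a quantized refinement of the classical relation $m_1m_2=m_1'm_2'+(\text{node count})$ that yields invariance of $\mu_\C$, and it should follow from the elementary product-to-sum identity $[a][b]-[a-1][b+1]=[a+b-1]\cdot[1]\cdots$ for quantum integers; making this bookkeeping precise for every combinatorial type of four-valent degeneration, and accounting correctly for the parallelogram regions in $S_h$, is where the real work lies.

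Once wall-invariance of $G(g,\Delta)(y)$ is established, the remaining assertions are immediate from the earlier results: positivity of the coefficients and symmetry $G(g,\Delta)(y)=G(g,\Delta)(y^{-1})$ follow termwise from Corollary~\ref{simple ends} and the corresponding proposition, since a sum of symmetric Laurent polynomials with positive coefficients is again such. The specialization $G(g,\Delta)(1)=N^\C(g,\Delta)$ follows from $G_h(1)=\mu_\C(h)$ together with the known fact that $\sum_h\mu_\C(h)$ computes the complex count, and $G(g,\Delta)(-1)=N^\R(g,\Delta,\X^\R)$ for a suitable $\X^\R$ follows from $G_h(-1)=\mu_\R(h)$ (valid under the primitivity hypothesis, by the last part of the proposition and Corollary~\ref{simple ends}) together with the correspondence theorem of \cite{Mik05} relating $\sum_h\mu_\R(h)$ to the Welschinger count.
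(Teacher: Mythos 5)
Your overall strategy---connect two generic configurations by a path, classify the codimension-one degenerations of the interpolating curves, and verify a local identity for the Block--G\"ottsche factors at each wall---is exactly the strategy of the paper, and your treatment of the specializations at $y=\pm1$ matches the paper's (which simply invokes Theorem~6 of \cite{Mik05}). But the two places where you defer the work are precisely where the content of the proof lies, and in both places your sketch as written would not go through. First, the local identity at a four-valent vertex: when the dual quadrilateral $Q=ABCD$ has no parallel sides there are exactly three subdivisions (Figure~\ref{3subdivisions}): the two diagonal triangulations and a third consisting of an inscribed parallelogram $P=ABCE$ together with the two triangles $ADE$, $CDE$ of $Q\setminus P$; one must first determine which subdivisions occur on which side of the wall (the parallelogram smoothing pairs with the diagonal triangulation whose diagonal is also a diagonal of $P$), and then prove
$$\bigl(y^{\Area BCD}-y^{-\Area BCD}\bigr)\bigl(y^{\Area ABD}-y^{-\Area ABD}\bigr)=\bigl(y^{\Area ACD}-y^{-\Area ACD}\bigr)\bigl(y^{\Area ABC}-y^{-\Area ABC}\bigr)+\bigl(y^{\Area ADE}-y^{-\Area ADE}\bigr)\bigl(y^{\Area CDE}-y^{-\Area CDE}\bigr).$$
This rests on the two parallelogram relations $\Area ACD-\Area ABC=\Area Q-\Area P=\Area ADE+\Area CDE$ and $\Area BCD-\Area ABD=\pm(\Area ADE-\Area CDE)$; your guessed identity ``$[a][b]-[a-1][b+1]=\cdots$'' presumes the areas of the two triangulations differ by one, which is false in general. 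You also do not address the degenerate shapes of $Q$ (trapezoid, or a triangle when two adjacent edges of the curve emanate in the same direction), where fewer than three subdivisions are realizable and the matching of sides is different.

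Second, your wall classification is incomplete, and the walls you dismiss as ``manifestly symmetric'' are not. Lemma~\ref{codim1} shows that besides the $\deff=1$ strata (a four-valent vertex, or a marked point colliding with a trivalent vertex) there is a genuine codimension-one stratum with $\deff=2$: two four-valent vertices joined by two parallel edges, which occurs because the cycle formed by those two edges must carry a marked point; this case needs its own argument. The ``marked point hits a vertex'' wall also requires a reconstruction argument showing that each side of the wall carries a unique deformation of the limiting curve with the same vertex multiplicities---it is true but not automatic. Finally, a small misattribution: symmetry and positivity of $G(g,\Delta)$ follow directly from the definition of $G_h$ as a product of the $G_V$, not from Corollary~\ref{simple ends}, which concerns the specialization at $-1$ under a weight hypothesis.
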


Theorem~\ref{main} is proved in Section~\ref{proof}.

\begin{rem}
If $\Delta$ is non-primitive, then we may interpret $N^\C(g,\Delta)$ as the number of curves in the
polarized toric surface $T_{\Delta^*}$ defined by the polygon $\Delta^*$ that pass through a generic configuration
of $k$ points in $(\C^\times)^2\subset T_{\Delta^*}$ and are subject to a certain tangency condition.
Namely, recall that the sides $\Delta'$ of the polygon $\Delta^*$ correspond to
the divisors $T_{\Delta'}\subset T_{\Delta^*}$.
We require that for each side $\Delta'$ the number of intersection points of the curves we count with
$T_{\Delta'}$ is equal to the number of vectors in the collection $\Delta$ which are dual to $\Delta'$. Furthermore,
all these intersection points should be smooth points of the curves and
we require that for each vector dual to $\Delta'$
the GCD of the coordinates
%in this collection
of the vector coincides with the order of
%tangency
intersection
of the
curve with $T_{\Delta'}$ in the corresponding point.
We say that such algebraic curves have degree $\Delta$.
\end{rem}

%\begin{rem}
Note that while $N^\C(g,\Delta)$ does not depend on the choice of a generic configuration
of $k$ points in $\tordva$, we do have such dependence
for $N^\R(g, \Delta, \X^\R)$ for $g>0$.
We can strengthen the last statement in the theorem by describing configurations
%$\X^\R=\{x_1,\dots,x_k\}\subset\rtordva$
$\X^\R$
that may be used for computation of $G(g,\Delta)(-1)$.
We refer to \cite{Mik12} for more details. Below we summarize some basic facts about the tropical
number $N^\R(g,\Delta,\X^\R)$ of real curves just for a reference, we will not need these properties
elsewhere in the paper.
%\end{rem}

\begin{defn}[cf. \cite{Mik12}]
Consider the space $\MM=(\rtordva)^k$ of all possible configurations of (ordered) $k$-tuples
of real points in $\rtordva$. The $(g,\Delta)$-discriminant $$\Sigma_{g,\Delta}\subset\MM$$
(where $g$ is a non-negative integer and $\Delta \subset \R^2$
is a primitive balanced collection)
is the closure of the locus
consisting of configurations $\X^\R\in\MM$ such that there exists a real algebraic curve $\R C$ of degee $\Delta$
passing through $\X$ satisfying to one of the following properties
\begin{itemize}
\item the genus of $\R C$ is strictly less than $g$ (if $\R C$ is reducible over $\C$ then by its genus we
mean $\frac{2-\chi(C)}{2}$, where $C$ is the (normalized) complexification of $\R C$);
\item the genus of $\R C$ is $g$, but $\R C$ is not nodal;
\item
the divisor $3H+K_C-D$ on the complexification $C$ of the real curve $\R C$ is special, where
$H$ is the plane section divisor, $K_C$ is the canonical divisor of $C$, and $D$ is the divisor formed on $\R C$
by our configuration $\X^\R$.
\end{itemize}
\end{defn}

\begin{lem}[cf. \cite{Mik12}]\label{lem12}
\noindent
\begin{enumerate}
\item The $(g,\Delta)$-discriminant is a proper subvariety {\rm (}of codimension at least 1{\rm )} in $\MM$.
\item
If $\X^\R, \Y^\R\in\MM$ are two generic configurations of points
such that
$\X^\R$ and $\Y^\R$ belong to
the same connected component
of $\MM\setminus \Sigma_{g,\Delta}$,
then $N^\R(g,\Delta,\X^\R)=N^\R(g,\Delta,\Y^\R)$.
%If $\X^\R\in\MM\setminus \Sigma_{g,\Delta}$ then there exists a neighborhood
%$U\ni\ X^\R$, $U\subset\MM$ such that for any configuration $\Y^\R\in U$ we have
%$N^\R(g,\Delta,\X^\R)=N^\R(g,\Delta,\Y^\R)$.
\item\label{subtropical}
Suppose that $\{p_1, \dots, p_k\}\subset\R^2$
is a {\rm (}tropically{\rm )} generic configuration of $k$ points.
Then for any sufficiently large numbers $t_1,t_2>1$ and any choice of signs $\sigma_j = (\sigma^{(1)}_j, \sigma^{(2)}_j)
= (\pm1,\pm1)\in (\Z/2)^2$, $j = 1$, $\ldots$, $k$,
the configurations
$(\sigma_1 t_1^{p_1},\dots,\sigma_k t_1^{p_k})$ and
$(\sigma_1 t_2^{p_1},\dots,\sigma_k t_2^{p_k})$
are contained in the same connected component
of $\MM\setminus\Sigma_{g,\Delta}$ {\rm (}in particular,
they are disjoint from $\Sigma_{g,\Delta}${\rm )}. Here
$\sigma_j t_i^{p_j} =
(\sigma^{(1)}_jt^{p^{(1)}_j}_i, \sigma^{(2)}_jt^{p^{(2)}_j}_i)$,
where $p_j = (p^{(1)}_j, p^{(2)}_j)$,
$j = 1$, $\ldots$, $k$, and $i = 1, 2$.
\end{enumerate}
\end{lem}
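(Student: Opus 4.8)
# Proof Proposal for Lemma 12

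The plan is to prove the three statements in order, since (2) follows from (1) by a standard monodromy/continuity argument, while (3) is the concrete geometric input that makes the tropical count computable via a real configuration. I treat them separately.

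\textbf{Part (1): the discriminant has codimension at least 1.} The strategy is to show that each of the three defining loci is a proper algebraic (or semialgebraic) subvariety, and then take the closure of their union. For the first locus (genus strictly less than $g$), I would argue by a dimension count: the space of real nodal curves of degree $\Delta$ and geometric genus exactly $g$ in the toric surface $T_{\Delta^*}$ has the expected dimension $k = \varkappa(\Delta)+g-1$ (the number of conditions imposed by passing through $k$ points), whereas curves of strictly smaller genus form families of strictly smaller dimension; pushing forward to $\MM = (\rtordva)^k$ via the evaluation map, the image of the lower-genus stratum cannot dominate $\MM$, so it lies in a proper subvariety. For the second locus (genus $g$ but not nodal), the non-nodal curves form a proper subvariety of the Severi variety of genus-$g$ curves (nodal is the generic condition on the Severi variety), and again the evaluation image is proper. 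For the third locus (the divisor $3H+K_C-D$ special on the complexification $C$), I would observe that speciality of a divisor is a closed condition (vanishing of the relevant $h^1$, or equivalently the rank of a period/cohomology matrix dropping), so it defines a Zariski-closed condition on the parameter space; I must check it does not hold identically, which follows because for a generic genus-$g$ curve through generic points the divisor $3H+K_C-D$ has the expected degree and is nonspecial by Riemann–Roch together with genericity. Taking the closure of the union of three proper subvarieties yields a proper subvariety $\Sigma_{g,\Delta}$, hence codimension at least 1.

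\textbf{Part (2): invariance across a chamber.} Here I would use the standard principle that a count of solutions to a system depending on parameters is locally constant away from the discriminant, provided the count is done with the appropriate signs. Concretely, on $\MM\setminus\Sigma_{g,\Delta}$, the set of real nodal genus-$g$ curves of degree $\Delta$ through the configuration $\X^\R$ forms the real points of a finite étale cover (the fiber of the evaluation map from the incidence variety), and the Welschinger sign is a locally constant function on each sheet because a node can change its elliptic/hyperbolic type only when the curve degenerates — i.e. only on $\Sigma_{g,\Delta}$. The third discriminant component is included precisely so that no curve appears from or disappears to infinity (or into a special divisor configuration) as we move within a chamber. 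Therefore along any path in a single connected component of $\MM\setminus\Sigma_{g,\Delta}$ the signed count $N^\R(g,\Delta,\X^\R)$ cannot jump, giving $N^\R(g,\Delta,\X^\R)=N^\R(g,\Delta,\Y^\R)$.

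\textbf{Part (3): the tropical limit stays in one chamber.} This is the heart of the lemma and where the real obstacle lies. The idea is a patchworking/large-complex-limit argument: as $t\to\infty$, the configurations $\sigma_j t^{p_j}$ approach the tropical configuration $\{p_j\}$ under the map $\Log_t$, and Viro's patchworking combined with Mikhalkin's correspondence theorem shows the real curves through $\sigma_j t^{p_j}$ are in bijection with (and degenerate to) the simple tropical curves of Definition~\ref{def1.1} through $\{p_j\}$. The task is to verify that for all sufficiently large $t$ none of the three discriminant conditions is met: since $\{p_j\}$ is tropically generic, the limiting tropical curves are all simple of genus exactly $g$ with vertices disjoint from the points, which rules out the first two components for large $t$ by the stability of the correspondence; the third (speciality) component is avoided because the tropical genericity forces $3H+K_C-D$ to have the expected nonspecial behaviour in the limit. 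The subtle point — and the main difficulty — is that I must show the \emph{same} chamber is reached for $t_1$ and $t_2$, not merely that each configuration is off the discriminant. For this I would connect $\sigma_j t_1^{p_j}$ to $\sigma_j t_2^{p_j}$ by the explicit path $t\mapsto \sigma_j t^{p_j}$, $t\in[\min(t_1,t_2),\max(t_1,t_2)]$, and prove this path never crosses $\Sigma_{g,\Delta}$ for $t$ large. This requires a uniform (in $t$) version of the correspondence: the number and signs of real curves are constant along the path because each stays in the patchworking regime where the bijection with tropical curves is rigid. Establishing this uniformity — controlling the implicit constant in "sufficiently large $t$" along the whole path — is the step I expect to be most delicate, and I would handle it by invoking the quantitative form of the correspondence theorem from~\cite{Mik05} (and its real refinement in~\cite{Mik12}) that guarantees the monotone scaling $t\mapsto \sigma_j t^{p_j}$ remains within the valid patchworking range once $t$ exceeds a threshold depending only on $\{p_j\}$, $g$, and $\Delta$.
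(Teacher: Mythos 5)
First, a point of comparison: the paper itself gives \emph{no} proof of this lemma. It is stated with the attribution ``cf.~\cite{Mik12}'', the surrounding text says ``We refer to \cite{Mik12} for more details'' and that these facts are summarized ``just for a reference,'' so there is no in-paper argument to measure your attempt against; your outline has to stand on its own. On that basis, parts (1) and (2) are acceptable sketches of the standard arguments (dimension count on the incidence variety for each of the three loci, then local constancy of the signed count within a chamber), with one misstatement worth flagging: in (2) you say the third discriminant component prevents curves from ``appearing from or disappearing to infinity.'' That is not its role. Speciality of $3H+K_C-D$ is the condition under which the differential of the evaluation map from the incidence variety to $\MM$ degenerates, i.e.\ it excludes \emph{critical values}, where pairs of real solutions are created or annihilated; escape of solutions to the boundary of the Severi variety is exactly what the first two loci (genus drop, non-nodal degeneration) exclude.

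The genuine gap is in part (3), which you yourself call the heart of the lemma. Your plan is to join $\sigma_j t_1^{p_j}$ to $\sigma_j t_2^{p_j}$ by the path $t\mapsto \sigma_j t^{p_j}$ and to assert that this path misses $\Sigma_{g,\Delta}$ for all large $t$ by ``invoking the quantitative form of the correspondence theorem.'' But the assertion that the entire ray $\{\sigma_j t^{p_j}\,:\,t\ge T\}$ avoids the discriminant, with a threshold $T$ depending only on $\{p_j\}$, $g$ and $\Delta$, \emph{is} the content of part (3); citing it is circular unless you can point to a reference that states it in this uniform form, and the correspondence theorems of \cite{Mik05} (Theorems 1 and 6) are counting statements valid for sufficiently large $t$, not statements that the lifted configurations avoid $\Sigma_{g,\Delta}$. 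What is actually needed is a compactness (tropical limit) argument: suppose, for contradiction, that there exist $t_n\to\infty$ and real curves through $\sigma_j t_n^{p_j}$ witnessing one of the three discriminant conditions; take the Hausdorff limit of the $\Log_{t_n}$-images to produce a tropical curve of degree $\Delta$ and genus at most $g$ through $\{p_1,\dots,p_k\}$ that either has genus strictly less than $g$, or is non-simple, or violates the transversality built into the correspondence — each case contradicting the tropical genericity of $\{p_1,\dots,p_k\}$ (Definition 4.7 of \cite{Mik05}), the last one via the completeness/uniqueness half of the correspondence argument. Until that step is carried out (or quoted from \cite{Mik12} as a precise statement rather than as a hoped-for ``quantitative form''), part (3), and hence the lemma, remains unproven in your write-up.
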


Configurations of points in $(\rtordva)^k$ that can be presented in the form
of \ref{lem12}\eqref{subtropical}  are called {\em subtropical}.

\begin{add}
We have $G(g,\Delta)(-1)=N^\R(g,\Delta,\X^\R)$ for any subtropical configuration $\X^\R\subset\MM$.
\end{add}

The addendum follows from Corollary \ref{simple ends} and Theorem 6 of \cite{Mik05}.

\subsection{Examples}\label{examples}
The polynomials $G(g,\Delta)$ can be computed with the help of floor diagrams
for planar tropical curves \cite{BM}
(particularly with the help of the labeled floor diagrams of \cite{FM})
or with the help of the lattice path algorithm~\cite{Mi-CRAS}.
Each edge of weight $w$ on a floor diagram
contributes a factor of $$(y^{\frac{w-1}2}+y^{\frac{w-3}2}+\dots+y^{\frac{1-w}2})^2$$
to the multiplicity of the floor diagram
as both endpoints of this edge are vertices of multiplicities $w$.

\begin{exa}\label{calculation}
Denote with $\Delta_d$ the primitive
balanced collection of vectors in $\R^2$
such that $(\Delta_d)^*$
is the lattice triangle with vertices $(0,0)$, $(d,0)$ and $(0,d)$.
Note that the projective closure of a curve
%of degree $\Delta_d$
in $\tordva$ with Newton polygon $(\Delta_d)^*$ is
a curve of degree $d$ in $\cp^2$. Vice versa, any degree $d$ projective curve disjoint from the points $(1:0:0)$, $(0:1:0)$ and $(0:0:1)$ is uniquely presented as such closure.

We have
$$G(0,\Delta_1)=G(0,\Delta_2)=G(\frac{(d-1)(d-2)}2,\Delta_d)=1.$$
Then we have $G(g,\Delta_d)=0$ whenever $g>\frac{(d-1)(d-2)}2$.

Some other instances of $G(g,\Delta_d)$ are given below:
$$G(0,\Delta_3)=y+10+y^{-1};$$
$$G(2,\Delta_4)=3y+21+3y^{-1};$$
$$G(1,\Delta_4)=3y^2+33y+153+33y^{-1}+3y^{-2};$$
$$G(0,\Delta_4)=y^3+13y^2+94y+404+94y^{-1}+13y^{-2}+y^{-3}.$$
One can easily obtain these formulas from Appendix A (the table) of \cite{FM} listing the floor
diagrams for relevant $g$ and $d$.

E.g. to compute $G(1,\Delta_4)$ we need to look at all
the 13 marked floor diagrams listed in Appendix A. It has 7 labeled diagrams without multiple edges,
the number of corresponding marked floor diagrams (the sum of the $\nu$-multiplicities from the
last column of the table) is 92. Then we have 4 labeled diagrams with a single weight 2 edge
yielding 23 marked floor diagrams; one labeled floor diagram with two weight 2 edges yielding
2 marked floor diagrams and a single marked floor diagram with a weight 3 edge.
We get
\begin{equation*}
G(0,\Delta_4)=92+23(y^{\frac12}+y^{-\frac12})^2+2(y^{\frac12}
+y^{-\frac12})^4+(y+1+y^{-1})^2
%=\\ 92+ 23(y+2+y^{-1})
%+2(y^2+4y+6+4y^{-1}+y^{-2})+y^2+2y+3+2y^{-1}+y^{-2}.
\end{equation*}
\end{exa}

Independence of $G(g,\Delta)$ of the choice of
%tropical
a generic
configuration $\X\subset\R^2$ used for its
computation has implication on the possible multiplicities of tropical curves of genus $g$ and degree $\Delta$
passing through $\X$. For instance, it is well-known (see \cite{Mik05})
that for $g=0$ and $\Delta=\Delta_3$
there are two possible types of a generic configuration of 8 points in $\R^2$.
For one type we have
one tropical curve of complex multiplicity 4 (with two multiplicity 2 vertices connected by an edge,
so its Block-G\"ottsche multiplicity is $y+2+y^{-1}$)
and eight curves of complex multiplicity 1 (so the Block-G\"ottsche multiplicity is also 1).
For the other type we have one curve of complex multiplicity 3 (and the Block-G\"ottsche multiplicity $y+1+y^{-1}$)
and nine curves of complex multiplicity 1. In both cases the total invariant adds up to $G(0,\Delta_3)=y+10+y^{-1}$
and no other distribution of multiplicities is possible.

\subsection{$\delta$-curves}\label{applications}
By the {\em degree} $\deg$ of a symmetric Laurent polynomial
we mean the highest degree of its monomial,
so that e.g.
$$\deg G(1,\Delta_4)=2.$$

For each simple tropical curve $h: \Gamma \to \R^2$,
denote by $\alpha_h$ the degree of the polynomial $G_h$. We refer to $\alpha_h$
as the {\em $\alpha$-multiplicity} of the curve $h$.
Recall that $\varkappa(\Delta^*)-\varkappa(\Delta)$ is the difference between the perimeter
of the integer polygon $\Delta^*$ and the number of vectors in $\Delta$.
If $\Delta$ is primitive then $\varkappa(\Delta^*)-\varkappa(\Delta)=0$.

\begin{prop}\label{degree_curve}
Let
$\Delta \subset \R^2$ be a
balanced collection,
$g$ be a non-negative integer number
such that $g \leq \#(\Delta^\circ \cap \ \Z^2)$,
and
$h: C \to \R^2$ be a simple tropical curve
of genus $g$ and degree $\Delta$.
%such that
%the number of infinite edges of $C$
%is equal to the perimeter $\#(\partial\Delta \cap \ \Z^2)$
%of~$\Delta$.
Then,
$$
\alpha_h \leq \#(\Delta^\circ \cap \ \Z^2) - g + \frac{\varkappa(\Delta^*)-\varkappa(\Delta)}{2},
$$
where $\Delta^\circ$ is the interior of~$\Delta^*$.
Furthermore,
$\alpha_h = \#(\Delta^\circ \cap \ \Z^2)
- g + \frac{\varkappa(\Delta^*)-\varkappa(\Delta)}{2}$
if and only if the dual subdivision
$S_h$ of $\Delta^*$ is formed by triangles.
\end{prop}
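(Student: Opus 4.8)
The plan is to upgrade the desired inequality into an exact identity for $\alpha_h$, obtained by reading off the degree of $G_h$ from the dual subdivision $S_h$ and then applying Pick's formula. Since $G_h=\prod_V G_V$ with the product taken over the trivalent vertices $V$ of $C$, and since \eqref{GV} exhibits $G_V$ as a symmetric Laurent polynomial of top degree $\tfrac{m_\C(V)-1}{2}$, the degree of the product is the sum of the degrees:
$$\alpha_h=\sum_V\frac{m_\C(V)-1}{2}=\frac12\sum_V m_\C(V)-\frac{t}{2},$$
where $t$ denotes the number of trivalent vertices of $C$. Thus the whole statement reduces to evaluating the two quantities $\sum_V m_\C(V)$ and $t$.

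First I would compute $\sum_V m_\C(V)$ using the subdivision $S_h$. By construction the triangles $\Delta(V)$, together with the parallelograms dual to the self-intersection points of $h$, tile $\Delta^*$; since $m_\C(V)$ is the lattice area of $\Delta(V)$ and lattice area is additive over the cells of $S_h$, the sum $\sum_V m_\C(V)$ equals the lattice area of $\Delta^*$ minus the total lattice area $A\ge 0$ of the parallelograms of $S_h$. Pick's formula applied to $\Delta^*$ gives its lattice area as $2\,\#(\Delta^\circ\cap\Z^2)+\varkappa(\Delta^*)-2$, the interior contributing $\#(\Delta^\circ\cap\Z^2)$ and the boundary contributing $\varkappa(\Delta^*)=\#(\partial\Delta^*\cap\Z^2)$. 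The decisive observation is that $A$ vanishes exactly when $S_h$ contains no parallelogram, i.e. exactly when $S_h$ is formed by triangles alone.

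Next I would compute $t$ by a Euler-characteristic count for the $3$-valent graph $C$. Writing $\ell$ for the number of bounded edges and counting edge-ends gives $3t=2\ell+\varkappa(\Delta)$, because each of the $\varkappa(\Delta)$ infinite edges (which realize the degree $\Delta$) contributes a single end; contracting the infinite rays shows $g=\dim H_1(C;\R)=\ell-t+1$. Eliminating $\ell$ yields $t=\varkappa(\Delta)+2g-2$. Substituting both computations into the expression for $\alpha_h$ collapses everything to the exact identity
$$\alpha_h=\#(\Delta^\circ\cap\Z^2)-g+\frac{\varkappa(\Delta^*)-\varkappa(\Delta)}{2}-\frac{A}{2}.$$
The asserted inequality is then immediate from $A\ge 0$, and equality holds precisely when $A=0$, that is, when $S_h$ consists only of triangles, which is exactly the claimed characterization.

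I do not expect a genuine obstacle: the argument is a bookkeeping computation once the conventions are pinned down. The only points that require care are normalizing the lattice area so that $m_\C(V)$ is twice the Euclidean area (so that Pick's formula takes the stated form and the halves in \eqref{GV} match), and counting $t$ correctly, making sure the $\varkappa(\Delta)$ infinite edges are treated as single-ended and do not contribute to $H_1(C;\R)$. With these conventions fixed, the identity for $\alpha_h$ drops out and both assertions follow.
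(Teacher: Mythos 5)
Your proposal is correct and follows essentially the same route as the paper, whose entire proof is the one-line remark that the statement follows from Pick's formula applied to the triangles of the dual subdivision $S_h$; you simply flesh out that bookkeeping (degree additivity of the $G_V$, additivity of lattice area over the cells of $S_h$, Pick applied to $\Delta^*$, and the Euler-characteristic count $t=\varkappa(\Delta)+2g-2$). The only cosmetic difference is that you apply Pick once to $\Delta^*$ and subtract the parallelogram area $A$, rather than to each triangle separately, which cleanly isolates the equality case $A=0$.
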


\begin{proof}
The statement follows from Pick's formula
applied to the triangles of the dual subdivision $S_h$
of~$\Delta$.
\end{proof}

For any balanced collection $\Delta$ of integer vectors in $\R^2$ and any integer number
$$0 \leq g \leq \#(\Delta^\circ \cap \ \Z^2),$$
we define $$\delta(g, \Delta) = \#(\Delta^\circ \cap \ \Z^2) - g + \frac{\varkappa(\Delta^*)-\varkappa(\Delta)}{2}.$$
If $\Delta$ is primitive, then
%$\delta$
$\delta(0, \Delta)$
is the number of interior lattice points
in $\Delta^*$,
%. In this case
and $\delta(g, \Delta)$ is equal to the number
of double points of any nodal irreducible curve in $(\C^\times)^2$
of genus $g$ and of Newton polygon $\Delta^*$.

A simple tropical curve $h: C \to \R^2$
of genus~$g$ and degree~$\Delta$ is called
a {\it $\delta$-curve} (respectively, ($\delta - i$)-{\em curve}) if
%the number of infinite edges of $C$
%is equal to $\#(\partial\Delta \cap \ \Z)$
%$\Delta$ is primitive
%and
$\alpha_h = \delta(g, \Delta)$
(respectively, $\alpha_h = \delta(g, \Delta) - i$).

%A polygon $\Delta$ is said to be {\it horizontally transversal}
%if any primitive integer vector parallel to an edge of $\Delta$
%is of the form $(\alpha, \pm 1)$ or $(\alpha, 0)$
%(cf.~\cite{BM}).
For a balanced collection $\Delta$ we introduce the number $\pi(\Delta)$
that is equal to the number of ways to introduce a cyclic order on $\Delta$
that agree with the counterclockwise order on the rays in the direction
of the elements of $\Delta$. Clearly, if $\Delta$ is primitive we have $\pi(\Delta) = 1$.
But if $\Delta$ contains non-equal vectors that are positive multiples of each other
then $\pi(\Delta)>1$. E.g. if $\pi(\{(-1,0), (1,3), (0,-1), (0,-2)\})=2$ as there are
two cyclic orders $(0, -1)$, $(0, -2)$, $(1, 3)$, $(-1,0)$ and
$(0, -2)$, $(0, -1)$, $(1, 3)$, $(-1, 0)$ that
agree with the counterclockwise order.

The following proposition was already discovered 
by Block and G\"ottsche  
in the case of primitive $\Delta$ with $h$-transversal $\Delta^*$
(see~\cite{BM2} for the definition of $h$-transversal polygon), in particular for
degrees corresponding to curves in $\pp^2$ and $\pp^1\times\pp^1$.
\begin{prop}[cf. \cite{BG}]
\label{degree}
Let $\Delta \subset \R^2$ be
%a horizontally transversal
%a convex lattice polygon.
a  balanced collection,
and $g$ be a non-negative integer number
such that $g \leq \#(\Delta^\circ \cap \ \Z^2)$.
Then,
\begin{enumerate}
\item the degree $\deg G(g, \Delta)$ of $G(g, \Delta)$ is equal to
$\delta(g, \Delta)$;
\item the coefficient of the leading
monomial of $G(g, \Delta)$
is equal to %$\binom{\#(\Delta^\circ \cap \ \Z^2)}{g}$.
$\pi(\Delta)\binom{g+\delta(g, \Delta)}{g}$.
\end{enumerate}
\end{prop}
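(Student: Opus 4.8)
The plan is to reduce the statement to a count of \emph{$\delta$-curves} and then to evaluate that count combinatorially. Since each factor $G_V$ in $G_h=\prod_V G_V$ is a monic symmetric Laurent polynomial of degree $(m_\C(V)-1)/2$, the product $G_h$ is itself monic of degree $\alpha_h=\sum_V (m_\C(V)-1)/2$. By Proposition~\ref{degree_curve} we have $\alpha_h\le\delta(g,\Delta)$, with equality precisely when the dual subdivision $S_h$ is a triangulation. As all the $G_h$ have non-negative coefficients there is no cancellation in $G(g,\Delta)=\sum_h G_h$; hence the coefficient of $y^{\delta(g,\Delta)}$ equals the number of $\delta$-curves in $\S(g,\Delta,\X)$, each contributing its leading coefficient $1$. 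Thus, invoking the invariance from Theorem~\ref{main} so that we may compute with a favorable generic $\X$, part (1) follows once a single $\delta$-curve is exhibited, and part (2) becomes the assertion that the number of $\delta$-curves through a generic $\X$ is $\pi(\Delta)\binom{g+\delta(g,\Delta)}{g}$.

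Next I would record the combinatorial meaning of a $\delta$-curve: its dual subdivision $S_h$ is a lattice triangulation $T$ of $\Delta^*$. Writing $v_{\mathrm{int}},v_{\mathrm{bd}},e_{\mathrm{int}},e_{\mathrm{bd}},f$ for the numbers of interior vertices, boundary vertices, interior edges, boundary edges and triangles of $T$, the relations $v-e+f=1$, $3f=2e_{\mathrm{int}}+e_{\mathrm{bd}}$ and $v_{\mathrm{bd}}=e_{\mathrm{bd}}$ give $b_1(C)=e_{\mathrm{int}}-f+1=v_{\mathrm{int}}$. Hence the genus of a $\delta$-curve equals the number $v_{\mathrm{int}}$ of interior lattice points of $\Delta^*$ used as vertices of $T$, while the condition of having degree $\Delta$ forces the boundary vertices of $T$ to be exactly the $\varkappa(\Delta)$ lattice points cutting $\partial\Delta^*$ into sub-edges of the weights prescribed by $\Delta$. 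So a genus-$g$ $\delta$-curve corresponds to a triangulation of $\Delta^*$ whose vertex set meets $\Delta^\circ$ in exactly $g$ points. Starting from a unimodular triangulation (for which $v_{\mathrm{int}}=\#(\Delta^\circ\cap\Z^2)$) and re-triangulating the star of one interior vertex at a time, one produces such triangulations for every $0\le g\le\#(\Delta^\circ\cap\Z^2)$; this gives the existence of a $\delta$-curve and completes part (1).

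For part (2) I would use the invariance of $G(g,\Delta)$ to replace $\X$ by a configuration adapted to the lattice-path count, and then set up a bijection between $\delta$-curves through $\X$ and the combinatorial data parametrizing them. These data are: an ordering of the sub-edges along each side of $\Delta^*$ (the only freedom in arranging equal-direction vectors of distinct weights, counted by $\pi(\Delta)$, consistently with the definition of $\pi(\Delta)$), together with a choice of which lattice points of $\Delta^*$ are activated as interior vertices. For primitive $\Delta$ one has $g+\delta(g,\Delta)=\#(\Delta^\circ\cap\Z^2)$, so the $g$ interior vertices may be chosen in $\binom{\#(\Delta^\circ\cap\Z^2)}{g}=\binom{g+\delta(g,\Delta)}{g}$ ways; the two independent choices then multiply to $\pi(\Delta)\binom{g+\delta(g,\Delta)}{g}$. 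In the non-primitive case the unused boundary lattice points, of which there are $\varkappa(\Delta^*)-\varkappa(\Delta)$, enlarge the pool of choices and account for the extra $\tfrac{\varkappa(\Delta^*)-\varkappa(\Delta)}{2}$ in the top entry of the binomial.

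The hard part is the last step: proving that each admissible combinatorial datum is realized by \emph{exactly one} $\delta$-curve through the generic configuration. This means showing, first, that the point (evaluation) conditions meet the relevant cell of $\Def(h)$ transversally with lattice index $1$ — equivalently, that the integer-linear system expressing the point conditions in the canonical integral coordinates on $\Def(h)$ is unimodular for triangulated combinatorial types — and, second, that for a suitably generic $\X$ exactly one triangulation with the prescribed interior-vertex set and boundary ordering carries a tropical curve through $\X$, so that no double counting or cancellation occurs. Establishing this unimodularity, and carrying out the bookkeeping that reorganizes the interior-point and unused-boundary-point choices into the single binomial coefficient $\binom{g+\delta(g,\Delta)}{g}$ in the non-primitive case, is where the real work lies.
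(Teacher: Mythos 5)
Your overall strategy is the one the paper uses: since each $G_V$ is a monic symmetric Laurent polynomial of degree $(m_\C(V)-1)/2$ with positive coefficients, there is no cancellation in $G(g,\Delta)=\sum_h G_h$, so by Proposition~\ref{degree_curve} the coefficient of $y^{\delta(g,\Delta)}$ is exactly the number of $\delta$-curves through $\X$; one then invokes the invariance of Theorem~\ref{main} to compute with a convenient configuration and enumerates $\delta$-curves by the data of a boundary ordering (the $\pi(\Delta)$ factor) together with a choice of $g$ interior lattice points of $\Delta^*$. Your Euler-characteristic computation identifying the genus of a $\delta$-curve with the number of interior vertices of the dual triangulation is correct and consistent with what the paper uses implicitly.

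The genuine gap is the step you yourself flag as ``where the real work lies'': that each admissible combinatorial datum is realized by \emph{exactly one} $\delta$-curve through the chosen generic configuration, and that these exhaust all $\delta$-curves. You propose to attack this via a transversality/unimodularity analysis of the evaluation map on $\Def(h)$, but you do not carry it out, so as written the count of the leading coefficient is not established. The paper closes precisely this gap without any new transversality argument: it places the $k$ points on a line with irrational slope and invokes the lattice path algorithm of~\cite{Mi-CRAS}, which already provides a bijection between lattice paths in $\Delta^*$ (equivalently, the subsets of lattice points you describe) and the curves through such a configuration; the paths contributing genus-$g$, degree-$\Delta$ curves are exactly those whose vertex set contains all vertices of $\Delta^*$, meets $\Delta^\circ$ in $g$ points, and subdivides $\dd\Delta^*$ compatibly with $\Delta$, and each such path produces a unique curve with triangulated dual subdivision, all other curves for the same path containing a parallelogram and hence having strictly smaller $\alpha$-multiplicity. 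Unless you are prepared to reprove that correspondence (which is essentially reproving the lattice path theorem), you should cite it; with that citation your argument becomes the paper's. A secondary, smaller issue: your accounting in the non-primitive case (attributing the extra $\tfrac{\varkappa(\Delta^*)-\varkappa(\Delta)}{2}$ in the binomial to ``unused boundary lattice points enlarging the pool'') is asserted rather than derived, and the numbers do not obviously match since there are $\varkappa(\Delta^*)-\varkappa(\Delta)$ such points, not half that many; this bookkeeping needs to be made precise.
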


\begin{proof}
By Proposition~\ref{degree_curve}, one has
$\deg G(g, \Delta) \leq \delta(g, \Delta)$.
A generic configuration
of $k = \varkappa(\Delta) + g - 1$ points
in $\R^2$ can be chosen on a line with irrational slope.
For
such a configuration $\X \subset \R^2$,
the lattice paths algorithm~\cite{Mi-CRAS}
provides a
bijection between certain subsets of
%collections
%of $g$
integer points of $\Delta^*$
and the set of $\delta$-curves of genus $g$ and degree $\Delta$
which pass through the points of $\X$.
Here we must restrict to the subsets that contain
all vertices of $\Delta^*$ and exactly
$g$ of the integer points of $\Delta^\circ$.
We have $\binom{g+\delta(g, \Delta)}{g}$
of such choices. The non-vertices points of $\dd\Delta$
must be chosen so that the corresponding curves have degree $\Delta$.
We have $\pi(\Delta)$ of such choices.
These subsets exhaust all paths corresponding to curves of genus $g$ and degree $\Delta$.
Each path produces a unique $\delta$-curve, all other curves for the same path contain at least
one parallelogram in their dual subdivision, so their $\alpha$-multiplicity is strictly smaller than $\delta(g,\Delta)$.
\end{proof}

\begin{cor}\label{bound}
Let $\Delta \subset \R^2$ be a primitive
balanced collection,
and $g$ be a non-negative integer number
such that $g \leq \#(\Delta^\circ \cap \ \Z^2)$.
Then, for any generic configuration $\X\subset\R^2$
of $k = \varkappa(\Delta) + g - 1$ points,
there exist exactly
%$\binom{\#(\Delta^\circ \cap \ \Z^2)}{g}$
$\binom{g + \delta(g, \Delta)}{g}$
%simple tropical curves $h: \Gamma \to \R^2$ of genus $g$
%and degree $\Delta$
%such that the polynomial $G_h$
%is of degree $\#(\Delta^\circ \cap \ \Z) - g$;
$\delta$-curves of genus $g$ and degree $\Delta$
which pass through the points of $\X$;
the complex multiplicity of each of these tropical curves
is at least
%$1 + 2(\#(\Delta^\circ \cap \ \Z) - g)$.
$1 + 2\delta(g, \Delta)$.

Furthermore, each $\delta$-curve is in a natural 1-1 correspondence with the choice of $g$ points
in $\Delta^\circ\cap\Z^2$.
\end{cor}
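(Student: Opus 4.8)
The plan is to extract the count and the multiplicity bound directly from Propositions~\ref{degree_curve} and~\ref{degree} together with Theorem~\ref{main}, and then to establish the correspondence by refining the genus--subdivision dictionary. Throughout, primitivity of $\Delta$ gives $\pi(\Delta)=1$ and $\varkappa(\Delta^*)=\varkappa(\Delta)$, so $\delta(g,\Delta)=\#(\Delta^\circ\cap\Z^2)-g$ and $g+\delta(g,\Delta)=\#(\Delta^\circ\cap\Z^2)$.

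First I would count the $\delta$-curves. By Proposition~\ref{degree_curve}, every $h\in\S(g,\Delta,\X)$ satisfies $\alpha_h\le\delta(g,\Delta)$, with equality precisely for the $\delta$-curves. Since $G_h=\prod_V G_V$ is a product of the polynomials~\eqref{GV}, each monic in top degree, the coefficient of $y^{\delta(g,\Delta)}$ in $G_h$ equals $1$ for a $\delta$-curve and $0$ otherwise; as all coefficients are positive there is no cancellation, so the number of $\delta$-curves through $\X$ equals the leading coefficient of $G(g,\Delta)$. By Proposition~\ref{degree} this is $\pi(\Delta)\binom{g+\delta(g,\Delta)}{g}=\binom{g+\delta(g,\Delta)}{g}$, and by Theorem~\ref{main} the value is the same for every generic $\X$.

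For the multiplicity bound, the equality $\alpha_h=\delta(g,\Delta)$ reads $\sum_V\bigl(m_\C(V)-1\bigr)=2\delta(g,\Delta)$, the sum being over the trivalent vertices of $C$. Writing $m_\C(V)=1+a_V$ with integers $a_V\ge0$ and using the elementary inequality $\prod_V(1+a_V)\ge1+\sum_V a_V$, I obtain $\mu_\C(h)=\prod_V m_\C(V)\ge 1+2\delta(g,\Delta)$.

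Finally, the natural correspondence. By Proposition~\ref{degree_curve} a $\delta$-curve is exactly one whose dual subdivision $S_h$ is a triangulation of $\Delta^*$, and primitivity forces all boundary lattice points of $\Delta^*$ to be its vertices. Identifying the vertices of $C$ with the triangles of $S_h$ and the bounded edges of $C$ with the interior edges of $S_h$, an Euler-characteristic computation gives that $g$ equals the number of interior lattice points of $\Delta^*$ that are vertices of $S_h$. Thus $h\mapsto\Phi(h):=\{\text{interior lattice points of }\Delta^*\text{ that are vertices of }S_h\}$ is a natural map from the $\delta$-curves through $\X$ to the $g$-element subsets of $\Delta^\circ\cap\Z^2$, and both sets have cardinality $\binom{\#(\Delta^\circ\cap\Z^2)}{g}=\binom{g+\delta(g,\Delta)}{g}$. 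It remains to show $\Phi$ is a bijection, and \emph{this is the step I expect to be the main obstacle}. My approach is to prove that for each subset $W$ the number of $\delta$-curves $h$ with $\Phi(h)=W$ is itself independent of $\X$: this should follow by refining the wall-crossing analysis behind Theorem~\ref{main}, the key local point being that an elementary modification exchanging one $\delta$-curve for another across a wall is a flip of the triangulation that preserves its vertex set, hence preserves $W$ (modifications involving curves of smaller $\alpha$-multiplicity do not affect $\delta$-curves). Evaluating this $\X$-independent per-$W$ count on a configuration lying on a line of irrational slope, where the lattice path algorithm~\cite{Mi-CRAS} (as in the proof of Proposition~\ref{degree}) exhibits the choice of the $g$ interior vertices as the only free datum, gives the value $1$ for every $W$, so $\Phi$ is a bijection for every generic $\X$. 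Making this per-$W$ invariance rigorous, uniformly in $\X$, is where the real work lies.
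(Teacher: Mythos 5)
Your treatment of the count and of the multiplicity bound is correct and is essentially the paper's own argument: the paper's entire written proof of Corollary~\ref{bound} is the single observation that $\alpha_h\le\frac{m-1}{2}$ for a simple curve of complex multiplicity $m$, which is exactly your inequality $\prod_V(1+a_V)\ge 1+\sum_V a_V$ applied with $\sum_V(m_\C(V)-1)=2\delta(g,\Delta)$; the count is read off from the leading coefficient in Proposition~\ref{degree} (each $\delta$-curve contributing $1$ to the top coefficient, non-$\delta$-curves contributing $0$) together with the invariance from Theorem~\ref{main}, just as you do.

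The difference is the final claim. The paper does not prove the ``natural 1-1 correspondence'' in this proof at all; the only justification available in the text is the lattice-path bijection invoked in the proof of Proposition~\ref{degree}, which identifies $\delta$-curves with $g$-element subsets of $\Delta^\circ\cap\Z^2$ only for a configuration chosen on a line with irrational slope. So your decision to single out the bijectivity of $\Phi$ for an arbitrary generic $\X$ as the real issue is sound, and you are attempting more than the paper does. However, the route you sketch has a concrete flaw: it is not true that every wall-crossing exchanging one $\delta$-curve for another is a vertex-preserving flip of the triangulation. In the degenerate case analysed in the proof of Theorem~\ref{main} where the colliding $4$-valent vertex has two adjacent edges emanating in the same direction with distinct weights $w_1\ne w_2$ (and both edges bounded), the dual cell $Q$ is a triangle and the two resolutions subdivide a side of $Q$ of integer length $w_1+w_2$ into segments of lengths $w_1$ and $w_2$ in the two possible orders; the subdividing point is then an interior lattice point of $\Delta^*$ that moves when the wall is crossed, so $W=\Phi(h)$ is not preserved even though both resolutions are $\delta$-curves of equal Block--G\"ottsche multiplicity. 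Hence the per-$W$ invariance cannot be established by the local argument you propose as stated; closing this step requires either a different argument or interpreting ``the'' correspondence via the lattice-path configurations, which is all that the paper itself actually provides.
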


\begin{proof}
%The first part of the statement is an immediate
%corollary of Theorem~\ref{main} and Proposition~\ref{degree}.
To establish the lower bound for complex multiplicity, notice
that for a simple tropical curve $h: C \to \R^2$
of complex multiplicity $m$ one has
$\alpha_h \leq \frac{m - 1}{2}$
(the equality being achieved
only if $h$ has a single vertex
of multiplicity greater than $1$).
\end{proof}

%\begin{cor}\label{delta-1}
%Let $d$ be a positive integer number,
%and $\Delta \subset \R^2$ be a primitive
%balanced collection such that
%$\Delta^*$ is the triangle with vertices
%$(0, 0)$, $(d, 0)$, and $(0, d)$.
%Then, for any generic configuration $\X\subset\R^2$
%of $k = 3d - 1$ points,
%there exists at least
%$3d + 1$
%rational {\rm (}$\delta - 1${\rm )}-curves of degree $\Delta$
%which pass through the points of $\X$.
%\end{cor}

\begin{cor}\label{real_count}
Let $\Delta \subset \R^2$
be a
%convex lattice polygon.
primitive balanced collection,
and $g$ be a non-negative integer number
such that $g \leq \#(\Delta^\circ \cap \ \Z^2)$.
Then, for any sufficiently large positive integer $d$,
the number of real curves
of genus $g$ and of Newton polygon $(d\Delta)^*$
which pass through $k = \varkappa(d\Delta) + g - 1$
points in a subtropical configuration in $(\R^*)^2$
is smaller than $N^\C_{g, d\Delta}$.
{\rm (}Here, $d\Delta$
%a polygon obtained from $\Delta^*$
%by rescaling: each side of $\Delta$ is replaced
%by a side which is $d$ times longer.
is the primitive balanced collection obtained
by repeating $d$ times the collection $\Delta$.{\rm )}
\end{cor}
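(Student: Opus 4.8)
The plan is to reduce the desired inequality to a statement about the single polynomial $G(g,d\Delta)(y)$ evaluated at $y=1$ and $y=-1$. By Theorem~\ref{main} the complex count is $N^\C(g,d\Delta)=G(g,d\Delta)(1)$, and by the Addendum, for a subtropical configuration $\X^\R$ the real (Welschinger-signed) count equals $G(g,d\Delta)(-1)$. Since $d\Delta$ is primitive and $\#((d\Delta)^\circ\cap\Z^2)\ge\#(\Delta^\circ\cap\Z^2)\ge g$, both results apply for every $d\ge 1$. Writing $G(g,d\Delta)(y)=\sum_j a_j y^j$ with all $a_j\ge 0$, we have
$$G(g,d\Delta)(1)-G(g,d\Delta)(-1)=\sum_j a_j\bigl(1-(-1)^j\bigr)=2\!\!\sum_{j\text{ odd}} a_j,$$
so the whole corollary becomes the assertion that \emph{$G(g,d\Delta)$ carries a positive coefficient at some odd power}, for all large $d$.

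Next I would analyze the support of $G(g,d\Delta)$. As $d\Delta$ is primitive, all infinite edges have weight $1$, so by Corollary~\ref{simple ends} each $G_h$ is a genuine Laurent polynomial with positive coefficients. Each factor $G_V$ in $G_h=\prod_V G_V$ has, by \eqref{GV}, support equal to a string of \emph{consecutive} powers symmetric about $0$; since the Minkowski sum of such strings is again a string of consecutive powers and there is no cancellation among positive coefficients, the support of $G_h$ is exactly $\{-\alpha_h,-\alpha_h+1,\dots,\alpha_h\}$, with every coefficient strictly positive. In particular, once $\alpha_h\ge 1$ the polynomial $G_h$ has positive coefficients at both even and odd powers.

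It then remains to produce one such curve. By Proposition~\ref{degree} (or Corollary~\ref{bound}) there is a $\delta$-curve of genus $g$ and degree $d\Delta$ through a generic tropical configuration, for which $\alpha_h=\delta(g,d\Delta)$. Because $d\Delta$ is primitive, $\delta(g,d\Delta)=\#((d\Delta)^\circ\cap\Z^2)-g$, and $(d\Delta)^*=d\,\Delta^*$, so $\#((d\Delta)^\circ\cap\Z^2)$ grows like $d^2\Area(\Delta^*)$ and hence $\delta(g,d\Delta)\ge 1$ for all large $d$. This single $\delta$-curve already contributes a positive coefficient at the odd power $j=1$, forcing $2\sum_{j\text{ odd}} a_j>0$ and therefore $N^\R(g,d\Delta,\X^\R)<N^\C(g,d\Delta)$.

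The main obstacle is the \emph{strictness}. The naive bound $|G(g,d\Delta)(-1)|\le G(g,d\Delta)(1)$ becomes an equality precisely when the Laurent polynomial is supported on a single parity class of exponents, and this cannot be excluded by looking at the top degree alone, since the parity of $\delta(g,d\Delta)$ oscillates with $d$. The essential input is therefore the consecutive-power structure of the building blocks $G_V$, which forces the support of $G(g,d\Delta)$ to be a \emph{full} integer interval $\{-\delta(g,d\Delta),\dots,\delta(g,d\Delta)\}$; this is what guarantees an odd power with positive coefficient as soon as $\delta(g,d\Delta)\ge 1$, and hence the strict inequality for all sufficiently large $d$.
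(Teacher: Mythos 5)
Your reduction proves a different statement from the one the corollary asserts. The corollary compares $N^\C_{g,d\Delta}$ with the plain, \emph{unsigned} number of real curves passing through a subtropical configuration, whereas your argument compares it with the Welschinger-signed count $N^\R(g,d\Delta,\X^\R)=G(g,d\Delta)(-1)$ supplied by the Addendum. Since each real curve contributes $\pm1$ to the signed count but $+1$ to the unsigned one, the signed count is always at most the unsigned count; hence proving $G(g,d\Delta)(-1)<G(g,d\Delta)(1)$ is strictly weaker than the claim and is, for instance, compatible with \emph{every} complex solution being real (with some curves carrying sign $-1$). The wording is deliberate: in Theorem~\ref{main} the authors add ``counted with Welschinger's signs'' when they mean the signed count, and that clause is absent here; correspondingly the paper's proof invokes Theorem 3 of \cite{Mik05} (the correspondence for the number of real curves over a subtropical configuration), not Theorem 6 (the Welschinger correspondence).

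The paper's route is as follows. The number of vertices of a $\delta$-curve of genus $g$ and degree $d\Delta$ is $\varkappa(d\Delta)+2g-2$, which grows linearly in $d$, while its $\alpha$-multiplicity equals $\delta(g,d\Delta)$, which grows quadratically; since a vertex of complex multiplicity at most $4$ contributes at most $\tfrac{3}{2}$ to $\alpha_h$, for large $d$ some vertex must have $m_\C(V)>4$. Over a subtropical configuration such a vertex admits strictly fewer real than complex lifts by Theorem 3 of \cite{Mik05}, so combined with Corollary~\ref{bound} the unsigned real count falls strictly below $\sum_h\mu_\C(h)=N^\C_{g,d\Delta}$. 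Your observation that the support of $G_h$ is the full interval $\{-\alpha_h,\dots,\alpha_h\}$ is correct and does give a clean proof of the \emph{signed} inequality $G(g,d\Delta)(-1)<G(g,d\Delta)(1)$ whenever $\delta(g,d\Delta)\ge 1$ (in fact for all such $d$, not only large ones), but the unsigned number of real curves is not a specialization of $G(g,d\Delta)$, so this approach cannot reach the statement as written.
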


\begin{proof}
The number of interior integer points of~$d\Delta^*$
depends quadratically on $d$. On the other side,
the
%perimeter $\#(\partial(d\Delta) \cap \ \Z^2)$
number $\varkappa(d\Delta)$
depends linearly on $d$.
Thus, for any sufficiently large integer $d$,
any generic configuration of
$k = \varkappa(d\Delta) + g - 1$ points in $\R^2$ and any  $\delta$-curve $h: C \to \R^2$
of genus $g$ and degree $d\Delta$ which passes through
the points of this configuration,
the number of vertices of $C$ is smaller than
$\frac{2}{3}\delta(g, d\Delta)$. Hence, $C$ has at least
one vertex of complex multiplicity $> 4$ as a curve with $n$ vertices of complex multiplicity at most 4
has $\alpha$-multiplicity at most $n\frac{4-1}{2}$.
The statement now follows from Corollary~\ref{bound}
and Theorem 3 of \cite{Mik05}.%\mnote{to find a reference}
\end{proof}

\begin{prop}
Let $d\ge 7$ be an integer number. For any subtropical configuration $\X$ of $3d-1$ points in $(\R^*)^2$
there exists a rational curve $C$ of degree $d$ in $\cp^2\supset (\C^*)^2\supset (\R^*)^2$ that is not real,
i.e. $\conj(C)\neq C$.
\end{prop}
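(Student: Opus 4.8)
The plan is to reduce the statement to a counting inequality and then to sharpen the estimate underlying Corollary~\ref{real_count}. Since $\X$ is subtropical it is, in particular, a configuration of \emph{real} points, so complex conjugation $\conj$ acts on the finite set of complex rational curves of degree $d$ passing through $\X$: if $C$ passes through the real points of $\X$ then so does $\conj(C)$, and $\conj(C)$ again has degree $d$ and genus $0$. The real curves are exactly the fixed points of $\conj$, and the non-real ones occur in conjugate pairs. Writing $N^\C(0,\Delta_d)$ for the total number of such complex curves (this equals $G(0,\Delta_d)(1)$ by Theorem~\ref{main}, since $\Delta_d$ is primitive) and $r$ for the number of real ones, we have $N^\C(0,\Delta_d)=r+2s$, where $s$ is the number of conjugate pairs. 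Thus it suffices to prove $r<N^\C(0,\Delta_d)$, i.e. $s\ge 1$.

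First I would distribute both counts over tropical curves by means of the correspondence theorem of \cite{Mik05}. Let $\X^{\trop}=\{p_1,\dots,p_k\}$ be the tropical configuration underlying $\X$; by Lemma~\ref{lem12}\eqref{subtropical} the configuration $\X$ is generic and lies in a single chamber, so the real count is well defined and is computed by patchworking along $\X^{\trop}$. One has $N^\C(0,\Delta_d)=\sum_{h}\mu_\C(h)$, the sum being over $h\in\S(0,\Delta_d,\X^{\trop})$, while every real curve through $\X$ is accounted for by exactly one tropical curve $h$, contributing some number $r_h\le \mu_\C(h)$ of real branches. Hence $r=\sum_h r_h\le \sum_h \mu_\C(h)=N^\C(0,\Delta_d)$, and this inequality is strict as soon as a single tropical curve satisfies $r_h<\mu_\C(h)$.

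Next I would produce such a deficient tropical curve using the $\delta$-curve. By Corollary~\ref{bound} with $g=0$ there is a unique $\delta$-curve $h_0$ through $\X^{\trop}$, and by Proposition~\ref{degree_curve} its dual subdivision consists only of triangles; consequently $\sum_{V} m_\C(V)=2\,\area(\Delta_d^*)=d^2$, while the number of vertices of $h_0$ equals $3d-2$. Since $d^2>2(3d-2)$ for $d\ge 6$, some vertex $V_0$ of $h_0$ has complex multiplicity $m_\C(V_0)\ge 3$, and in fact the largest vertex multiplicity grows with $d$. The local real-versus-complex count at a trivalent vertex in a subtropical configuration, as in Theorem 3 of \cite{Mik05}, shows that a vertex whose multiplicity exceeds an explicit threshold contributes strictly fewer real branches than complex ones for every admissible choice of signs; carrying the corresponding inequality through for the vertex $V_0$ gives $r_{h_0}<\mu_\C(h_0)$ precisely once $d\ge 7$. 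This yields $r<N^\C(0,\Delta_d)$, hence $s\ge 1$, and so a non-real curve $C$ with $\conj(C)\neq C$ exists.

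The main obstacle is this final, sharp local estimate. The multiplicity bound $\mu_\C(h_0)\ge 1+2\delta(0,\Delta_d)$ of Corollary~\ref{bound} together with the crude count in the proof of Corollary~\ref{real_count} only forces a vertex of multiplicity $>4$ when $d\ge 12$; pushing the threshold down to $d\ge 7$ requires analysing the \emph{actual} number of real solutions attached to the triangle dual to $V_0$, via Pick's formula and the signs of the patchworking polynomial, rather than using the complex multiplicity alone. Verifying that this real count is genuinely smaller than $m_\C(V_0)$ for \emph{all} admissible sign distributions, and that the resulting deficit first becomes unavoidable at $d=7$, is the delicate point of the argument.
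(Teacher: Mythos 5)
Your overall strategy is the paper's: locate a tropical rational curve through the underlying tropical configuration that has a vertex forcing a non-real lift (via Theorem 3 of \cite{Mik05}), and the $\delta$-curve is indeed the right candidate. But there is a genuine gap exactly where you flag ``the delicate point'': you never establish that for $d\ge 7$ the $\delta$-curve actually has such a vertex. Your averaging argument ($\sum_V m_\C(V)=d^2$ over $3d-2$ vertices) only produces a vertex of multiplicity $\ge 3$, and this is not enough: the obstruction coming from Theorem 3 of \cite{Mik05} is that a vertex is ``bad'' when its multiplicity differs from $1$, $2$ and $4$, or equals $4$ with at least one adjacent edge of odd weight; your averaging does not rule out that every vertex has multiplicity $1$, $2$ or $4$ with even adjacent edges, in which case no non-real lift is forced. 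The sharpened local estimate you invoke to push the threshold from $d\ge 12$ down to $d\ge 7$ is precisely the missing step, and the route you sketch (a vertex-by-vertex analysis of patchworking signs) is neither carried out nor needed.

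The paper closes this gap by a combinatorial count on the $\alpha$-multiplicity rather than on $\sum_V m_\C(V)$. Suppose every vertex of the $\delta$-curve $h_0$ were of a real-compatible type. A vertex of multiplicity $4$ contributes $\frac32$ to $\alpha_{h_0}=\deg G_{h_0}$, one of multiplicity $2$ contributes $\frac12$, one of multiplicity $1$ contributes $0$. Since all $3d$ infinite rays have weight $1$ (odd), a vertex adjacent to one ray cannot be of the multiplicity-$4$-with-all-even-edges type (so contributes at most $\frac12$), and by the balancing condition modulo $2$ a vertex adjacent to two rays has multiplicity $1$; each leaf therefore lowers the naive bound $\frac32(3d-2)$ by at least $\frac34$, giving $\alpha_{h_0}\le\frac{9d-12}{4}$. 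But the $\delta$-curve satisfies $\alpha_{h_0}=\delta(0,\Delta_d)=\frac{(d-1)(d-2)}{2}$, and $\frac{9d-12}{4}<\frac{(d-1)(d-2)}{2}$ for $d\ge 7$ --- a contradiction. Hence some vertex is of a bad type and at least one lift is non-real. This is a purely linear-versus-quadratic comparison exploiting the weight-$1$ ends; no refined real enumeration at a single vertex is required. If you want to salvage your write-up, replace the averaging step and the promised ``sharp local estimate'' by this $\alpha$-multiplicity bound.
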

\begin{proof}
We need to show that for any configuration of $3d-1$ points in tropically general position there exist
a tropical rational curve of degree $d$ (i.e. corresponding to the balanced collection
of vectors $(-1,0)$, $(0,-1)$, $(1,1)$ repeated $d$ times), passing through this configuration
with a vertex of multiplicity different from 1, 2 or 4. Furthermore, if the multiplicity
is 4, then all three adjacent edges must have even weight. Otherwise at least one lift
of this tropical curve is not real by Theorem 3 of \cite{Mik05}.

Suppose that the (unique) $\delta$-curve $C$ conforms to this property.
Since $C$ is rational it has $3d-2$ vertices by Euler's formula.
A vertex adjacent to an infinite ray may not have multiplicity 4 as the weight of the infinite ray is 1.

Note that
by the balancing condition modulo 2
if a vertex is adjacent to an edge of odd weight
then there must be another adjacent edge of odd weight. Thus
if a vertex of $C$ is adjacent to
two infinite rays, then the multiplicity of this vertex is 1.

A vertex of multiplicity 4 contributes $\frac32$ to $\alpha$-multiplicity, a vertex of multiplicity 2 contributes $\frac12$,
while a vertex of multiplicity 1 contributes 0.
As we have $3d$ leaves and each decreases the possible contribution either by 1 or by $\frac34$,
the total $\alpha$-multiplicity of $C$ is bounded from above by
$$\frac32(3d-2)-\frac343d=\frac{9d-12}{4}\le \frac{(d-1)(d-2)}2.$$
The last inequality holds if $d\ge 7$.
\end{proof}

\subsection{Rational $(\delta - 1)$-curves: seven curves in the plane $\pp^2$ and eight in the hyperboloid $\pp^1\times\pp^1$}\label{delta-1}

Proposition~\ref{degree} implies that
for any balanced collection $\Delta \subset \R^2$,
any integer number $0 \leq g \leq \#(\Delta^\circ \cap \ \Z^2)$,
and any generic configuration $\X\subset\R^2$
of $k = \varkappa(\Delta) + g - 1$ points,
there exists a $\delta$-curve of genus $g$ and degree $\Delta$
which passes through the points of~$\X$.
It can happen that all immersed tropical curves of genus $g$
and degree $\Delta$ which pass through
a generic configuration
of $k = \varkappa(\Delta) + g - 1$ points in $\R^2$
are $\delta$-curves. This is the case, for example,
if $g = 0$ and the balanced collection $\Delta$ consists of three vectors,
e.g. $\{(2, -1), (-1, 2), (-1, -1)\}$.
%where $d$ is a positive integer number.

Nevertheless, there are situations, where one can guarantee
the existence of $(\delta-1)$-curves among the interpolating
tropical curves. In particular, we always have rational $(\delta-1)$-curves
in $\pp^2$ and $\pp^1\times\pp^1$ anytime $\Delta$ is primitive and
$\Delta^*$
has lattice points in its interior. Recall that a generic curve of degree $d$
in $\pp^2$ is given by the primitive balanced collection $\Delta_d$
such that $\Delta^*_d$ is the triangle with vertices $(0,0), (d,0), (0,d)$.
Similarly, a generic curve of bidegree $(d,r)$ in $\pp^1\times\pp^1$ is given
by the primitive balanced collection $\Delta_{d,r}$ such that
$\Delta^*_{d,r}$ is the rectangle with vertices $(0,0), (d,0), (0,r), (d,r)$.

\begin{prop}\label{delta-1-curves}
%Let $d \geq 3$ be a positive integer number,
%and $\Delta \subset \R^2$ be a primitive
%balanced collection such that
%$\Delta^*$ is the triangle with vertices
%$(0, 0)$, $(d, 0)$, and $(0, d)$.
%Then,
For any generic configuration $\X\subset\R^2$
of $k = 3d - 1$ points, $d \geq 3$,
there exist at least
$7$ rational {\rm (}$\delta - 1${\rm )}-curves of degree $\Delta_d$
which pass through the points of $\X$.
\end{prop}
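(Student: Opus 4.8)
The plan is to count rational $(\delta-1)$-curves of degree $\Delta_d$ through $\X$ by choosing a convenient generic configuration and applying the lattice-path algorithm, then invoking the invariance established in Proposition~\ref{degree} to transfer the count to arbitrary $\X$. By Proposition~\ref{degree}, the leading coefficient of $G(0,\Delta_d)$ is $\pi(\Delta_d)\binom{\delta}{0}=1$ where $\delta=\delta(0,\Delta_d)=\binom{d-1}{2}$, so there is exactly one $\delta$-curve; the content of the present statement is the existence of at least seven curves of $\alpha$-multiplicity exactly $\delta-1$. First I would fix a configuration of $k=3d-1$ points on a line with irrational slope, so that the lattice-path algorithm of~\cite{Mi-CRAS} applies and puts the genus-$0$ curves through $\X$ in correspondence with lattice paths in $\Delta_d^*$ (the triangle with vertices $(0,0),(d,0),(0,d)$). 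By Proposition~\ref{degree_curve}, a rational curve $h$ is a $(\delta-1)$-curve precisely when its dual subdivision $S_h$ consists of triangles together with a controlled defect of Pick-area one: concretely, $\alpha_h=\delta-1$ forces $S_h$ to be a primitive triangulation except for a single ``defect'' contributing one less interior lattice point than a full triangulation would.

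The heart of the argument is a local analysis of which deviations from the unique $\delta$-curve drop $\alpha_h$ by exactly $1$ rather than more. Recall from the discussion preceding Proposition~\ref{degree} that each path produces a unique $\delta$-curve, while other curves attached to the same path acquire parallelograms in their dual subdivision and hence strictly smaller $\alpha$-multiplicity. I would therefore classify the minimal modifications of the $\delta$-triangulation that decrease the number of interior points by exactly one: the simplest such modification replaces two adjacent primitive triangles (forming a unit parallelogram with a lattice point configuration admitting two triangulations) or merges cells so that exactly one interior lattice point of $\Delta_d^\circ$ is ``lost'' to the boundary of a non-primitive triangle of lattice area $3$ with one interior point becoming an edge point. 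Using Pick's formula $\alpha_h=\#(\Delta^\circ\cap\Z^2)-g+\tfrac{\varkappa(\Delta^*)-\varkappa(\Delta)}{2}-(\text{parallelogram deficiency})$, each elementary parallelogram or non-triangular cell lowers $\alpha_h$ by a definite amount, and I would enumerate those that cost exactly $1$.

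The combinatorial count is where I expect the bookkeeping to localize near the corners of the triangle $\Delta_d^*$: since $d\ge 3$, each of the three corners supports a bounded, $d$-independent family of local reconfigurations of the triangulation that realize $\alpha_h=\delta-1$, and one must check that each such local move extends to a genuine simple tropical curve through $\X$ (i.e. that the lattice path can be completed and that the resulting curve is rational and passes through the configuration). Producing seven essentially distinct configurations — I would expect them to arise as a small number of local patterns near each of the three vertices of $\Delta_d^*$, plus possibly one or two interior patterns — and verifying they are realizable and pairwise distinct is the main obstacle. Once these seven are exhibited for the special line configuration, the tropical invariance of $G(0,\Delta_d)$ from Theorem~\ref{main} together with the fact (Corollary~\ref{bound}) that the $\delta$-curve count is configuration-independent guarantees that the $(\delta-1)$-coefficient of $G(0,\Delta_d)$ is at least $7$, whence at least seven rational $(\delta-1)$-curves pass through \emph{any} generic $\X$.
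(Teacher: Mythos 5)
There is a genuine gap in the final step: the number of $(\delta-1)$-curves through a configuration $\X$ is \emph{not} a tropical invariant, so exhibiting seven of them for one special (line or floor-decomposed) configuration does not transfer to arbitrary $\X$. What is invariant is the coefficient $a_{\delta-1}$ of $G(0,\Delta_d)$ at $y^{\delta-1}$, and this coefficient receives contributions from two sources: each $(\delta-1)$-curve contributes $1$, but the unique $\delta$-curve $h$ also contributes the coefficient of $G_h$ at $y^{\delta-1}$, which equals its number of vertices of complex multiplicity greater than $1$ --- and that number depends on $\X$. (This is exactly why the bound $7$ is sharp for $d=4$ with the configuration of Figure~\ref{sharpness}, where the $\delta$-curve contributes $6$ to $a_{\delta-1}=13$, even though the floor-decomposed configuration sees $d+5=9$ distinct $(\delta-1)$-curves.) Your concluding sentence, ``the $(\delta-1)$-coefficient of $G(0,\Delta_d)$ is at least $7$, whence at least seven rational $(\delta-1)$-curves pass through any generic $\X$,'' is therefore a non sequitur: for large $d$ the $\delta$-curve alone can account for up to $3d-6$ of the coefficient $a_{\delta-1}=3d+1$.

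To close the gap you need two things your proposal does not supply. First, the exact value $a_{\delta-1}=3d+1$ (not merely a lower bound of $7$ from one configuration); the paper gets this from floor diagrams, and your lattice-path count would have to be carried out completely, not just sketched as ``local patterns near the corners.'' Second, and essentially, an upper bound on the contribution of the $\delta$-curve valid for \emph{every} generic $\X$: the paper shows that the unique rational $\delta$-curve, having $3d-2$ trivalent vertices, always has at least $4$ vertices of complex multiplicity $1$ (a short argument on the tree obtained by deleting the infinite edges, using that vertices adjacent to two infinite edges, or adjacent to such vertices in the degenerate tree shapes, have multiplicity $1$). This gives a contribution of at most $3d-6$, and $3d+1-(3d-6)=7$. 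Without an argument of this kind, no count performed at a single configuration can yield the stated bound for all $\X$.
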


\begin{proof}
Let $\X\subset\R^2$
be a generic configuration of $k = 3d - 1$ points,
and let $h: C \to \R^2$ be the unique rational
$\delta$-curve of degree $\Delta_d$ such that
$\X \subset h(C)$.
%To prove the statement
%of the proposition it is sufficient to show that
The number of rational $(\delta - 1)$-curves
of degree $\Delta_d$ which pass
through the points of $\X$
is equal to the coefficient of $G(0, \Delta_d) - G_h$ at
$y^{\delta(0, \Delta_d) - 1}$. We need to show that
this coefficient is at least 7, as a $(\delta-1)$-curve can only contribute 1 to
the coefficient $a_{\delta-1}$ of $G(0, \Delta_d)$ at $y^{\delta(0, \Delta_d) - 1}$
and the contribution of higher $\alpha$-multiplicities curves is offset by
the corresponding coefficient of $G_h$.

%The coefficient of $G(0, \Delta_d)$ at
%$y^{\delta(0, \Delta_d) - 1} = y^{\frac{(d - 1)(d - 2)}{2} - 1}$
%can be easily calculated
%using floor diagrams: it is equal to $3d + 1$.
We can easily compute that  $a_{\delta-1}=3d+1$ with the help of floor diagrams.
To see this we note that a marked floor diagram corresponds to a $(\delta-1)$-curve
if there is an elevator of weight 1 that crosses one floor without stop while all other
elevators connect adjacent floors (or connect the lowest floor to negative infinity).

As the floor diagram is a tree for $g=0$ there are only
two such possibilities: the two top floors are both connected to the third floor from above
or the second floor from below has an infinite elevator going down. The first case
has three possible marking while the second case has $d+2$ markings.

The coefficient of $G_h$ at
$y^{\delta(0, \Delta) - 1} = y^{\frac{(d - 1)(d - 2)}{2} - 1}$
is equal to the number of vertices $V$ of $C$
such that $m_\C(V) > 1$.
The floor-decomposed $\delta$-curve cannot have elevators crossing floors,
so it has $d-2$ elevators of weight 2 or more (those that connect any pair of adjacent floors
except for the one connecting the top two floors). Thus the corresponding $y$-polynomial
multiplicity is a product of $2(d-2)$ non-unit factors and contributes $2(d-2)$ to $a_{\delta-1}$.
Adding up we get $a_{\delta-1}=3+d+2+2d-4=3d+1$.

%The degree of the polynomial $G(0, \Delta)$
%is equal to $\frac{(d - 1)(d - 2)}{2}$.

We can estimate the $y^{\delta(0, \Delta_d) - 1}$-coefficient of $G_h$ for the unique
$\delta$-curve $h:C\to\R^2$ passing through an arbitrary generic configuration $\X$.
The total number of vertices of $C$ is equal to
$3d - 2$. It remains to show that
at least $4$ vertices of $C$ have complex
multiplicity $1$. Denote by $T$ the compliment in $C$
of all infinite edges, and denote by $O$
the set of vertices of $T$ of valency $1$ (in $T$).
If the set $O$ has at least $4$ vertices,
then the required statement is proved, because
any vertex adjacent to two infinite edges of $C$
has complex multiplicity $1$.
If the set $O$ consists of $3$ vertices,
then at least one of these vertices is connected by an edge
to a vertex of $T$ of valence $2$ (in $T$);
the latter vertex is also of complex multiplicity $1$,
and we obtain again that $C$ has at least $4$ vertices
of complex multiplicity $1$.
Finally, assume that the set $O$ consists of two vertices
(the graph $T$ is a tree, thus it has at least two vertices
of valency $1$). In this case, $T$ is a linear tree.
Each of two vertices of valency $1$ in $T$ is connected
by an edge to a vertex of valency $2$ in $T$, and this valency $2$
vertex is of complex multiplicity $1$. Thus, in this case,
the curve $C$ has at least $4$ vertices of complex multiplicity $1$.

Summarizing we see that the $y^{\delta(0, \Delta_d) - 1}$-coefficient of $G_h$
cannot be higher than $3d-2-4$ which is less than $a_{\delta-1}$ by 7.
\end{proof}

\begin{rem}
Notice that the lower bound provided
by Proposition~\ref{delta-1-curves} is sharp in degree $d = 4$.
Indeed, one has
$$G(0,\Delta_4)=y^3+13y^2+94y+404+94y^{-1}+13y^{-2}+y^{-3},$$
see Example~\ref{calculation}. If the generic configuration of
$11 = 3 \times 4 - 1$ points in $\R^2$ is chosen in such a way
that the dual subdivision $S_h$ of the unique
rational $\delta$-curve $h: C \to \R^2$ of degree $\Delta_4$
is the one shown in Figure~\ref{sharpness},
then there exist exactly $13 - 6$
rational $(\delta - 1)$-curves of degree $\Delta_4$
which pass through the points of $\X$, because
the coefficient of $G_h$ at $y^2$ is equal to $6$.
To construct $\X$ it suffices to choose 11 points at distinct edges of a tropical curve dual to
the subdivision of Figure~\ref{sharpness}.
\end{rem}
\begin{figure}[h]
\includegraphics[height=25mm]{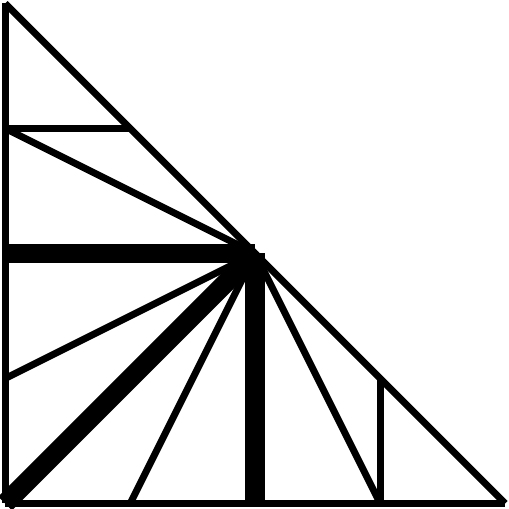}
\caption{\label{sharpness}}
\end{figure}

\begin{prop}\label{delta-1-curves-p1p1}
For any generic configuration $\X\subset\R^2$
of $k = 2d+2r - 1$ points, $d,r \geq 2$,
there exist at least
$8$ rational {\rm (}$\delta - 1${\rm )}-curves of degree $\Delta_{d,r}$
which pass through the points of $\X$.
\end{prop}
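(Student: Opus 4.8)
The plan is to mirror the structure of the proof of Proposition~\ref{delta-1-curves} almost verbatim, now working with the rectangle $\Delta_{d,r}^*$ in place of the triangle $\Delta_d^*$. As before, the number of rational $(\delta-1)$-curves of degree $\Delta_{d,r}$ through a generic $\X$ equals the coefficient of $G(0,\Delta_{d,r})-G_h$ at $y^{\delta(0,\Delta_{d,r})-1}$, where $h:C\to\R^2$ is the unique rational $\delta$-curve through $\X$. We must show this coefficient is at least $8$, so the goal splits into two computations: the coefficient $a_{\delta-1}$ of $G(0,\Delta_{d,r})$ at $y^{\delta-1}$, computed once and for all on a convenient configuration, and the coefficient of $G_h$ at the same power for an arbitrary $\delta$-curve $h$, which we must bound from above.

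First I would compute $a_{\delta-1}$ using floor diagrams for the polygon $\Delta_{d,r}^*$, as in Proposition~\ref{delta-1-curves}. Since $g=0$ the floor diagram is a tree, and a marked floor diagram contributes to $a_{\delta-1}$ exactly when there is a single weight-$1$ elevator crossing one floor (or one floor-pair connection is replaced by such a crossing), with all other elevators connecting adjacent floors. I would enumerate the finitely many combinatorial possibilities compatible with the rectangular shape---the extra width $d$ of the rectangle compared to the triangle changes the marking counts---and add the contributions together with the $2\times(\text{number of weight-}\ge 2\text{ adjacent-floor elevators})$ term coming from the product structure of the multiplicity, exactly as in the $\pp^2$ case. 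This is a routine but case-dependent bookkeeping step and should yield an explicit linear-in-$(d,r)$ value for $a_{\delta-1}$.

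Next I would bound the $y^{\delta-1}$-coefficient of $G_h$, which equals the number of vertices $V$ of $C$ with $m_\C(V)>1$, equivalently $3d$-something minus the number of multiplicity-$1$ vertices. Here I would run the same tree argument: let $T$ be the complement in $C$ of all infinite edges, let $O$ be its set of $1$-valent vertices, and observe that any vertex of $C$ adjacent to two infinite edges has complex multiplicity $1$, while $1$-valent and $2$-valent vertices of $T$ force additional multiplicity-$1$ vertices. For the rectangle one must track the four sides and the $2d+2r$ leaves carefully; I expect the combinatorics of $O$ to again guarantee a fixed minimal number of multiplicity-$1$ vertices, giving an upper bound on the $y^{\delta-1}$-coefficient of $G_h$.

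Subtracting, the difference $a_{\delta-1}-(\text{bound on }G_h\text{ coefficient})$ should come out to at least $8$, and I would conclude as in the previous proof. \textbf{The main obstacle} is the second step: unlike the triangle, the rectangle is symmetric under two independent reflections and has two pairs of parallel sides of possibly different lengths, so the extremal combinatorial configuration of $C$ that minimizes the number of multiplicity-$1$ vertices (the analogue of the ``linear tree'' case) is less rigidly determined, and one must check that no floor-decomposed $\delta$-curve does better than the bound. Verifying that the worst case still leaves enough multiplicity-$1$ vertices---and that the gap is exactly $8$ rather than something smaller---is where the real care is required; the floor-diagram count of $a_{\delta-1}$ is comparatively mechanical.
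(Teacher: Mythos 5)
Your plan is the paper's approach: compare $a_{\delta-1}$, computed by floor diagrams, with the $y^{\delta-1}$-coefficient of $G_h$ for the unique $\delta$-curve $h$, bounded by counting multiplicity-one vertices. But as written the proposal stops exactly where the number $8$ has to come from: both computations are deferred ("should yield", "should come out to at least $8$"), so nothing is actually proved yet. For the record, the two facts you need are: (i) the floor-diagram count gives $a_{\delta-1}=2d+2r+2$ (the paper obtains this as a special case of Proposition~\ref{adelta-1}); and (ii) the $\delta$-curve contributes at most $2d+2r-6$. The "main obstacle" you flag in (ii) is in fact no harder than in the $\pp^2$ case: since all $2d+2r$ ends of a degree-$\Delta_{d,r}$ curve point in the horizontal or vertical directions, a vertex adjacent to two infinite edges cannot have those two edges parallel (balancing would force a non-immersed or degenerate vertex), so it is adjacent to one horizontal and one vertical end and has complex multiplicity $1$; the tree argument from Proposition~\ref{delta-1-curves} then transfers verbatim to give at least $4$ multiplicity-one vertices among the $\varkappa(\Delta_{d,r})-2=2d+2r-2$ vertices of the rational curve, whence the bound $2d+2r-6$. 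The difference $(2d+2r+2)-(2d+2r-6)=8$ then closes the proof; without pinning down these two numbers the argument is only a template.
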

\begin{proof}
The proof is similar to the previous proposition. We have $\varkappa(\Delta_{d,r})=2d+2r$
and the infinite directions are either horizontal or vertical, so the maximal contribution of
the only $\delta$-curve for any generic configuration $\X$ to $a_{\delta-1}$ is $2d+2r-6$.
In the same time with the help of floor diagrams we can verify that $a_{\delta-1}=2d+2r+2$,
which is a special case of the following proposition.
\end{proof}

\begin{rem}
The sharpness of Proposition \ref{delta-1-curves-p1p1} is  easy to see for $d=r=2$.
As $G(0,\Delta_{2,2}=y+10+y^{-1}$ and the floor diagram $\delta$-curve have two vertices
of multiplicity 2, we have eight $(\delta-1)$-curves for any floor decomposed generic configuration $\X$.
\end{rem}

Recall (see \cite{BM2}) that an $h$-transversal polygon is given by the following collection
of integer numbers: the length $d_+\ge 0$ of the upper side, the length $d_-\ge 0$ of the lower side,
and two sequences of $d>0$ integer numbers: a non-increasing sequence $d_l$
and a non-increasing sequence $d_r$ (subject to some additional conditions on these numbers,
in particular if $d_+=0$ then the last element of $d_r$ is always greater than the last element of $d_l$).
\begin{prop}\label{adelta-1}
Let $\Delta$ be a primitive balanced configuration such that $\Delta^*$ is an $h$-transverse polygon
that has a lattice point in its interior.
We have
$$a_{\delta-1}=\varkappa(\Delta)-2+c_+(\Delta)+c_-(\Delta)+c_l(\Delta)+c_r(\Delta).$$
for the coefficients $a_{\delta-1}$ of $G(0,\Delta)$ at $y^{\delta(0,\Delta)-1}$.

Here $c_\pm=2$ if $d_\pm>0$. We have $c_+=1$ (resp. $c_-=1$) if $d_+=0$ (resp. $d_-=0$) and the difference between
the last elements of $d_r$ and $d_l$ is 1 (resp. the difference between the first elements of $d_l$ and $d_r$ is 1).
Otherwise $c_\pm=0$.
We define $c_l$ (resp. $c_r$) as the number
of pairs of subsequent elements in the non-increasing sequence $d_l$ (resp. non-increasing sequence $d_r$) that
are different by 1. In particular, we have $a_{\delta-1}\ge\varkappa(\Delta)-2$.
\end{prop}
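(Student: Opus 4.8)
The plan is to exploit the invariance furnished by Theorem~\ref{main}: since $a_{\delta-1}$ is the coefficient of $G(0,\Delta)$ at $y^{\delta(0,\Delta)-1}$ and this polynomial is independent of $\X$, I would compute it for the configuration best adapted to floor decompositions, namely points on a line of small irrational slope, so that every genus-$0$ curve of degree $\Delta$ through $\X$ is floor-decomposed and is encoded by a marked floor diagram in the sense of \cite{BM2}, \cite{FM} (equivalently, by a lattice path \cite{Mi-CRAS}). Only curves $h$ with $\alpha_h\in\{\delta(0,\Delta),\delta(0,\Delta)-1\}$ can affect the $y^{\delta(0,\Delta)-1}$-coefficient. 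By Corollary~\ref{bound} there is a unique $\delta$-curve $h_0$; it contributes the coefficient of $y^{\delta(0,\Delta)-1}$ in $G_{h_0}$, which, since the top coefficient of each $G_V$ is $1$, equals the number $M_0$ of vertices of $h_0$ with $m_\C(V)>1$. Every $(\delta-1)$-curve contributes exactly $1$, the leading coefficient of its $G_h$ being again $1$. Thus I would organize the proof around the identity $a_{\delta-1}=M_0+N_{\delta-1}$, where $N_{\delta-1}$ is the number of $(\delta-1)$-curves through $\X$.

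For the first term I would describe $h_0$ explicitly. As a genus-$0$ floor-decomposed curve it is a trivalent tree with $d$ floors (one per integer level of $\Delta^*$, where $d$ is the common length of $d_l,d_r$), joined linearly by $d-1$ interior elevators, together with the weight-$1$ vertical ends coming from the top and bottom edges and the ends dual to the edges of the left and right boundary. The interior elevator crossing the line $y=k$ has weight equal to the horizontal lattice width $W_k$ of $\Delta^*$ at that level, and by the per-edge description of the refined multiplicity in Section~\ref{examples} the two endpoints of a weight-$w$ elevator are exactly the vertices with $m_\C=w$, all remaining vertices having $m_\C=1$. Hence $M_0=2\cdot\#\{k:1\le k\le d-1,\ W_k\ge 2\}$, a quantity readable off from $d_+,d_-,d_l,d_r$. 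I would also record that $h_0$ has exactly $\varkappa(\Delta)-2$ trivalent vertices (Euler characteristic of a trivalent tree with $\varkappa(\Delta)$ ends), matching the fact that $S_{h_0}$ consists of $\varkappa(\Delta)-2$ triangles via Pick's formula as in Proposition~\ref{degree_curve}.

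For the second term I would characterize the $(\delta-1)$-curves combinatorially: by the area count behind Proposition~\ref{degree_curve} their dual subdivision $S_h$ is made of triangles together with a single unit parallelogram, equivalently their floor diagram is that of $h_0$ modified by exactly one weight-$1$ elevator that crosses a single floor without stopping, all other elevators remaining minimal. I would then enumerate these marked diagrams by the floor being crossed, counting markings via \cite{FM}. The crossings occurring in the bulk of the four boundary pieces, after summing marking multiplicities and combining with $M_0$, contribute the term $\varkappa(\Delta)-2$ (the $W_k$-dependence of $M_0$ being cancelled by that of the bulk count, which is why the answer is width-independent); the crossings at the extremal floors and at the lattice points of $\dd\Delta^*$ where two successive boundary slopes differ by $1$ produce the corrections. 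A crossing adjacent to a genuine top (resp.\ bottom) edge admits two admissible markings, giving $c_+=2$ when $d_+>0$ (resp.\ $c_-=2$ when $d_->0$); when $d_+=0$ the apex degenerates and a crossing survives only if the two boundary slopes meeting there differ by $1$, giving $c_+\in\{0,1\}$, and symmetrically for $c_-$; a slope change by $1$ between consecutive entries of $d_l$ (resp.\ $d_r$) contributes one further marked diagram each, producing $c_l$ (resp.\ $c_r$).

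The main obstacle is precisely this final bookkeeping: verifying the marking counts near the two horizontal edges and the two lattice-convex boundaries for an arbitrary $h$-transverse $\Delta^*$, together with the corner interactions and the dichotomy governing $c_\pm$ when $d_\pm=0$. Everything else I expect to be routine: assembling the contributions yields $a_{\delta-1}=\varkappa(\Delta)-2+c_+(\Delta)+c_-(\Delta)+c_l(\Delta)+c_r(\Delta)$, and since each correction term is nonnegative the stated inequality $a_{\delta-1}\ge\varkappa(\Delta)-2$ is immediate.
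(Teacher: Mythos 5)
Your overall strategy coincides with the paper's: pass to a floor-decomposed configuration, write $a_{\delta-1}$ as the contribution $M_0=\#\{V:\ m_\C(V)>1\}$ of the unique $\delta$-curve plus the number $N_{\delta-1}$ of $(\delta-1)$-curves, and enumerate the latter via marked floor diagrams. But as a proof the proposal is incomplete, and you say so yourself: the entire decisive verification --- the marking counts near the four boundary pieces, the corner interactions, the dichotomy for $c_\pm$ when $d_\pm=0$ --- is labelled ``the main obstacle'' and is not carried out. That bookkeeping \emph{is} the content of the proposition; the paper, terse as it is, at least records the concrete numbers ($2d-2$ from the $\delta$-curve, and two $(\delta-1)$-diagrams with $d_++2$ and $d_-+2$ markings when $d_\pm>0$, with the stated correction when $d_\pm=0$).

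Beyond incompleteness, two structural claims in your plan do not survive scrutiny. First, when $d_+>0$ and $d_->0$, convexity of $\Delta^*$ together with the existence of an interior lattice point forces every interior width $W_k$ to be at least $2$; hence $M_0=2(d-1)$ carries no $W_k$-dependence, and an interior weight-$1$ elevator crossing a floor is impossible for a genus-$0$ curve (it would have to coexist with elevators of weight $W_k-1\ge 1$ at both levels it traverses, closing a cycle). So there is no ``bulk count'' indexed by floors whose $W_k$-dependence cancels that of $M_0$; the term $\varkappa(\Delta)-2$ arises arithmetically as $2(d-1)+d_++d_-$, i.e.\ from the $\delta$-curve's vertices plus part of the markings of the two boundary diagrams. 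Second, and for the same reason, the $(\delta-1)$-curves responsible for $c_l$ and $c_r$ cannot be of the ``weight-$1$ elevator crossing a floor'' type: their unit parallelogram must be spanned by two non-vertical primitive directions differing by $(0,\pm 1)$, i.e.\ it records a crossing of two floor edges at a level where the left or right boundary slope jumps by $1$ --- a mechanism your characterization of $(\delta-1)$-curves excludes. A concrete check: for the pentagon with vertices $(0,0),(2,0),(2,1),(1,2),(0,2)$ one has $\varkappa(\Delta)=7$, $c_+=c_-=2$, $c_l=0$, $c_r=1$, so the formula gives $a_{\delta-1}=10$, hence $N_{\delta-1}=10-M_0=8$ and $N^\C=4+8=12$, consistent with the count of rational anticanonical curves on the degree-$7$ del Pezzo surface; your two families of through-elevator diagrams account for only $3+4=7$ of the required $8$ curves, and the missing one is exactly the $c_r$-curve. (To be fair, the paper's own published argument also omits an explicit treatment of $c_l$ and $c_r$, but a complete proof must identify these curves, which your framework as stated cannot.)
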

\begin{proof}
We use the floor decomposition from \cite{BM2}. If $d_+>0$ and $d_->0$
then the contribution of the $\delta$-curve to $a_{\delta-1}$ is $2d-2$ as all its finite elevators must have
weight at least 2. There are two possible floor diagrams for a $(\delta-1)$-curve, they have
$d_++2$ and $d_-+2$ possible markings respectively. Adding up we get
$a_{\delta-1}=2d-2+d_++2+d_-+2=\varkappa(\Delta)+2$ as $\varkappa(\Delta)=d_++d_-+2d$.

If $d_+=0$ the top elevator of the $\delta$-curve floor diagram
may or may not have weight 1. Its weight is equal to the difference
of last elements in $d_r$ and $d_l$. If this weight is 1 then we have a unique $(\delta-1)$-curve
floor diagram with an elevator crossing the second floor from above. In the same time the contribution
of the $\delta$-curve to $a_{\delta-1}$ has to be decreased by 2 in such case (in comparison with
$2d-2$ in the case when all $(d-1)$ finite elevators have weight at least 2).
If this weight is 2 then there is no correction neither to the contribution of the $\delta$-curve nor
to the number of $(\delta-1)$-curves. We have a similar situation for the case $d_-=0$.
\end{proof}

\begin{exa}
If $\Delta=\Delta_d$ we have a lattice point in the interior of $\Delta^*$ iff $d\ge 3$.
In this case we have $$a_{\delta-1}=3d+1=\varkappa(\Delta)+1.$$
If $\Delta=\Delta_{d,r}$ we have a lattice point in the interior of $\Delta^*$ iff $d,r\ge 2$.
In this case we have $$a_{\delta-1}=2d+2r+2=\varkappa(\Delta)+2.$$
\end{exa}

If $\Delta^*$ is a general $h$-transverse polygon, the  argument from the proofs
of Proposition \ref{delta-1-curves} and \ref{delta-1-curves-p1p1} that ensures two multiplicity 1 vertices
for a $\delta$-curve is not applicable. But the contribution of the $\delta$-curve to $a_{\delta-1}$
can still be bounded from above by $\varkappa(\Delta)-2$, the number of all vertices of a rational curve
with $\varkappa$ tails. Thus we get the following corollary.
\begin{cor}
Let $\Delta$ be a primitive balanced configuration such that $\Delta^*$ is an $h$-transverse polygon
that has a lattice point in its interior. For any generic configuration $\X$ of $\varkappa(\Delta)-1$ points in $\R^2$
there exist at least $c_+(\Delta)+c_-(\Delta)+c_l(\Delta)+c_r(\Delta)$ distinct $(\delta-1)$ rational curves through $\X$,
where $c_\pm$, $c_r$ and $c_l$ are defined in Proposition \ref{adelta-1}.
\end{cor}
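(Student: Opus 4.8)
The plan is to combine the exact computation of the coefficient $a_{\delta-1}$ of $G(0,\Delta)$ from Proposition~\ref{adelta-1} with an upper bound on the contribution of the unique rational $\delta$-curve $h$ to that coefficient, exactly in the spirit of Propositions~\ref{delta-1-curves} and~\ref{delta-1-curves-p1p1}. First I would recall that by Proposition~\ref{degree} (applied with $g=0$) there exists, for any generic $\X$, a unique rational $\delta$-curve $h:C\to\R^2$ through $\X$, since the leading coefficient of $G(0,\Delta)$ is $\pi(\Delta)\binom{\delta(0,\Delta)}{0}=1$ for primitive $\Delta$. The number of rational $(\delta-1)$-curves through $\X$ is the coefficient of $G(0,\Delta)-G_h$ at $y^{\delta(0,\Delta)-1}$: indeed each $(\delta-1)$-curve contributes exactly $1$ to this coefficient of $G(0,\Delta)$, and the single $\delta$-curve $h$ contributes the coefficient of $G_h$ at the same power, while every curve of strictly higher $\alpha$-multiplicity has $\alpha_h<\delta-1$ by Proposition~\ref{degree_curve} and so contributes nothing. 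Hence the count we want is $a_{\delta-1}-[\,G_h\,]_{y^{\delta-1}}$.

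Next I would read off $a_{\delta-1}=\varkappa(\Delta)-2+c_+(\Delta)+c_-(\Delta)+c_l(\Delta)+c_r(\Delta)$ from Proposition~\ref{adelta-1}, which holds precisely under the hypotheses of the corollary (primitive $\Delta$, with $\Delta^*$ an $h$-transverse polygon having an interior lattice point). The remaining task is to bound the coefficient $[\,G_h\,]_{y^{\delta-1}}$ from above. As noted in the text preceding the statement, this coefficient equals the number of vertices $V$ of $C$ with $m_\C(V)>1$, because $G_h$ is a product over vertices of the factors \eqref{GV}, each of degree $(m_\C(V)-1)/2$, and dropping the degree by exactly $1$ from the top means replacing a single nonconstant factor by its subleading term. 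Since $C$ is rational with $\varkappa(\Delta)$ tails, Euler's formula gives $\varkappa(\Delta)-2$ trivalent vertices in total, so trivially $[\,G_h\,]_{y^{\delta-1}}\le\varkappa(\Delta)-2$.

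Combining the two inputs yields that the number of rational $(\delta-1)$-curves through $\X$ is at least
$$\bigl(\varkappa(\Delta)-2+c_++c_-+c_l+c_r\bigr)-\bigl(\varkappa(\Delta)-2\bigr)=c_+(\Delta)+c_-(\Delta)+c_l(\Delta)+c_r(\Delta),$$
which is the asserted lower bound. The one point requiring care—and the main obstacle—is the crude bound $[\,G_h\,]_{y^{\delta-1}}\le\varkappa(\Delta)-2$: in the special cases $\Delta_d$ and $\Delta_{d,r}$ one can do substantially better (the arguments of Propositions~\ref{delta-1-curves} and~\ref{delta-1-curves-p1p1} exhibited at least four, resp. six, multiplicity-$1$ vertices), but those arguments relied on the infinite directions being few and controlled, and as the text remarks they are \emph{not} applicable for a general $h$-transverse polygon. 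So here I would deliberately use only the tautological bound by the total vertex count $\varkappa(\Delta)-2$, which is valid for every rational $\delta$-curve regardless of the shape of $\Delta^*$; this is exactly the trade-off that makes the general corollary weaker than the $\pp^2$ and $\pp^1\times\pp^1$ results but keeps it uniformly true. The proof is then immediate from Proposition~\ref{adelta-1} together with this observation.
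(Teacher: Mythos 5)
Your argument is exactly the paper's: it deduces the corollary from Proposition~\ref{adelta-1} together with the tautological bound that the unique rational $\delta$-curve, having $\varkappa(\Delta)-2$ trivalent vertices in total, contributes at most $\varkappa(\Delta)-2$ to $a_{\delta-1}$. (One wording slip only: you wrote that ``every curve of strictly higher $\alpha$-multiplicity has $\alpha_h<\delta-1$ and so contributes nothing,'' where you clearly meant strictly \emph{lower} $\alpha$-multiplicity; this does not affect the argument.)
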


\begin{rem}
%More generally,
For any non-negative integer $j$,
we may treat the coefficient
of the polynomial $G(g,\Delta)$ at $y^j$ as a non-negative integer
invariant $a_j(g,\Delta)$
for the number of tropical curves passing through a generic configuration of $k = \varkappa(\Delta) + g - 1$ points.
Here only tropical curves of multiplicity at least $2j+1$ contribute to $a_j(g, \Delta)$
(each with the corresponding coefficient
at $y^j$ of its Block-G\"ottsche multiplicity).
Thus $G(g,\Delta)$ can be viewed as infinite number of integer-valued invariants
of tropical curves.
\end{rem}

\section{Proof of Theorem \ref{main}}\label{proof}
The second part of the statement follows from
Theorem 6 of \cite{Mik05}.
The fact that $G(g, \Delta)$ is a symmetric Laurent polynomial
with positive coefficients immediately follows
from the definition. It remains to prove
that $G(g, \Delta)$ is independent on the choice of $\X$.
Our proof is similar to the proof of Theorem 4.8 in~\cite{GM}
and the proof of Theorem 1 in~\cite{IKS}.

Recall that $\X=\{p_1,\dots,p_k\}$ is a configuration of  $k = \varkappa(\Delta) + g - 1$ points in $\R^2$
tropically generic in the sense of Definition 4.7 of \cite{Mik05}.
To show independence of $G(g,\Delta)$ of $\X$ it suffices to show that the sum \eqref{go} stays invariant
if we move one of the points of $\X$, say $p_k$ in a smooth path $p_k(t)$, $t\in[-\epsilon,\epsilon]$, $\epsilon>0$,
so that the configurations
$\X(t) = \{p_1,\dots,p_{k-1},p_k(t)\}$ are tropically generic
whenever $t\neq 0$.

Let $t_0 \in [-\epsilon, \epsilon]$,
and let $h(t_0): C(t_0) \to \R^2$ be a tropical curve
of genus $g' \leq g$ and degree $\Delta$
such that $\X(t_0) \subset h(t_0)(C(t_0))$. Put
\begin{equation}\label{deff}
\deff(C(t_0)) = \sum_V (\val(V) - 3) + (g-g') + m,
\end{equation}
where the first sum is taken over all vertices of $C(t_0)$,
%whose valency is at least $3$,
and $m$ is equal
to the number of vertices of $C(t_0)$ whose images
under $h(t_0)$ are contained in $\X(t_0)$.

As
%$\X(t)=\{x_1,\dots,x_{k-1},x_k(t)\}$ is tropically generic for all
%$t>0$,
$\X(t_0)$ is tropically generic in the case
$t_0 \ne 0$, it follows from Proposition 2.23 of \cite{Mik05}
that $\deff(C(t_0)) = 0$. Furthermore, if we slightly perturb our
generic points $p_1, \dots, p_{k-1}\in\R^2$
%the configurations $\X(\pm\epsilon)$
%$(x_1,\dots,x_{k-1},x_k(\pm\epsilon))$
%remain generic, so
the sets $\S(g,\Delta,\X(\pm\epsilon))$ remain unchanged after
perturbation.
Proposition 3.9 of~\cite{GM} implies the following statement.

\begin{lem}\label{codim1}
There exists a finite set $D \subset \R^2$
such that under the condition $p_k(t_0) \not\in D$
one has either $\deff(C(t_0)) \leq 1$, or
$\deff(C(t_0)) = 2$ and $C(t_0)$ has two $4$-valent vertices
connected by two edges.
\end{lem}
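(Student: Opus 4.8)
The plan is to prove the statement by a dimension count over combinatorial types: for every combinatorial type $\tau$ of tropical curves of degree $\Delta$ and genus $g'\le g$ I would bound the dimension of the set of positions of the moving point $p_k$ for which a curve of type $\tau$ passes through the configuration, and show this set is finite as soon as $\deff\ge 2$ and $\tau$ is not the exceptional type. First I would fix the generic points $p_1,\dots,p_{k-1}$ and regard the last point $q=p_k$ as a free variable, so that the relevant configuration space is the affine $2$-plane $P=\{(p_1,\dots,p_{k-1})\}\times\R^2\subset(\R^2)^k$. For a fixed type $\tau$ let $\mathcal{M}_\tau$ denote the polyhedral space of marked tropical curves of that type and $\ev_\tau:\mathcal{M}_\tau\to(\R^2)^k$ the evaluation at the $k$ marked points. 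The set $B_\tau\subset\R^2$ of admissible positions of $q$ is the projection to the last factor of $\ev_\tau(\mathcal{M}_\tau)\cap P$; since $\ev_\tau(\mathcal{M}_\tau)$ is a finite polyhedral complex, a generic choice of $p_1,\dots,p_{k-1}$ yields
$$\dim B_\tau\le \dim\ev_\tau(\mathcal{M}_\tau)-2(k-1)=2-\codim\ev_\tau(\mathcal{M}_\tau).$$

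Next I would compute the codimension. Counting parameters, $\dim\mathcal{M}_\tau$ equals $2$ (translation of one vertex) plus the number of bounded edges, minus the rank of the system of cycle-closing equations, plus one free parameter for each of the $k-m$ marked points lying in the interior of an edge, where $m$ is the number of vertices whose image is a marked point. When the cycle-closing system has full rank $2g'$ — which by Proposition 2.23 of~\cite{Mik05} is the generic situation — substituting $\sum_V\val(V)=2\cdot\#\{\text{bounded edges}\}+\varkappa$ together with $k=\varkappa+g-1$ gives $\dim\mathcal{M}_\tau=2k-\deff(\tau)$, whence $\codim\ev_\tau(\mathcal{M}_\tau)\ge\deff(\tau)$ and $\dim B_\tau\le 2-\deff(\tau)$. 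Thus every type with $\deff\ge 2$ contributes only finitely many admissible positions $q$, except for those types whose cycle-closing system is rank-deficient, for which $\dim\mathcal{M}_\tau$ is larger and the codimension bound drops.

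The heart of the matter is therefore to classify the rank-deficient types, since each lost unit of rank raises $\dim\mathcal{M}_\tau$ by one and lowers the codimension bound by one. A cycle has a rank-deficient closing condition precisely when all of its edges are parallel in $\R^2$. I would show that this forces each vertex on the cycle to carry two collinear edges, and that the balancing condition at a trivalent vertex with two collinear edges makes the third edge collinear too, which is impossible for an immersed curve; hence every degenerate cycle runs through vertices of valence at least $4$, and two such cycles cannot share a vertex. The minimal instance is a single pair of edges joining two $4$-valent vertices: it contributes exactly $2$ to $\sum_V(\val(V)-3)$ and removes exactly one unit of rank, so for it $\codim\ev_\tau(\mathcal{M}_\tau)=\deff(\tau)-1=1$ and $\dim B_\tau\le 1$ — these curves occur along a wall, not at isolated points. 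For $b$ vertex-disjoint such bigons one gets $\sum_V(\val(V)-3)\ge 2b$ against a rank drop of $b$, and a longer all-parallel cycle forces an even larger valence defect, so a short bookkeeping shows $\codim\ge 2$ for every rank-deficient type other than the exceptional one. This classification is exactly the content of Proposition~3.9 of~\cite{GM}, which I would invoke here.

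Finally I would assemble $D$. There are only finitely many combinatorial types of degree $\Delta$ and genus at most $g$ (the number of vertices is bounded by $2g-2+\varkappa$), so it suffices to set $D$ to be the union of the finite sets $B_\tau$ over all types with $\deff(\tau)\ge 2$ that are not the exceptional type; this is a finite subset of $\R^2$ depending only on the fixed generic points $p_1,\dots,p_{k-1}$. For $p_k(t_0)\notin D$ no such type passes through $\X(t_0)$, so any curve $C(t_0)$ has $\deff\le 1$ or is the exceptional defect-$2$ type with two $4$-valent vertices joined by two edges, which is the assertion. I expect the genuine obstacle to be the rank analysis of the cycle-closing system combined with the collinearity argument pinning down the exceptional type: establishing that a rank drop occurs only through parallel cycles, and that the unique minimal such configuration is a double edge between two $4$-valent vertices, is where all the geometry sits and is precisely what Proposition~3.9 of~\cite{GM} supplies; the surrounding dimension count and the genericity of $p_1,\dots,p_{k-1}$ are then routine.
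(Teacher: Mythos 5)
Your proposal is correct and follows essentially the same route as the paper: a dimension count over combinatorial types via the evaluation map, using genericity of $p_1,\dots,p_{k-1}$ to show that types of the expected dimension contribute at most finitely many positions of $p_k$, and appealing to Proposition~3.9 of~\cite{GM} to identify the only higher-dimensional (rank-deficient) type as the one with two $4$-valent vertices joined by two edges. The extra detail you supply on the formula $\dim\MM^\alpha_{g,\Delta}=2k-\deff$ and on the parallel-cycle analysis is consistent with what the paper delegates to~\cite{GM}.
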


\begin{proof}
Proposition 3.9 of \cite{GM}
%computes
concerns the dimensions of the moduli spaces
%of configurations
$\MM_{g,\Delta}^\alpha$,
%made from the
of tropical curves $h:C\to\R^2$ of genus $g'\le g$ and degree $\Delta$ with
$k$ marked points $y_1,\dots,y_k\in C$
such that $(h, y_1, \ldots, y_k)$
has a combinatorial type $\alpha$. Here,
%$\alpha$ denotes the
by the combinatorial type
of $(h,y_1,\dots,y_k)$, we mean
the combinatorial type of the graph $C$ together with
the slopes of its edges under $h$ and the distribution of the points $y_1,\dots,y_k$
among the edges and vertices of $C$.
%Note that the codimension of $\alpha$
%is defined in \cite{GM} coincides with $\deff(C)$.

We are looking at the curves $h: C \to \R^2$ such that
$h(y_j)=p_j$, $j = 1, \dots, k$.
For a given combinatorial type $\alpha$,
consider the evaluation map $$\ev\MM_{g,\Delta}^\alpha\to(\R^2)^k$$
defined by $(h,y_1,\dots,y_k)\mapsto (h(y_1),\dots,h(y_k))$.
As we may slightly perturb our generic points $p_j$, $j=1,\dots,k-1$, if needed,
we may assume that $\ev^{-1}((p_1, \dots, p_{k-1})\times\R^2)$ is of codimension
$2k-2$ in $\MM_{g,\Delta}^\alpha$. Thus,  any curve
$h: C\to\R^2$ with $h(y_j) = p_j$, $j = 1$, $\ldots$, $k$,
must be of a combinatorial type $\alpha$ with
$\dim\MM_{g,\Delta}^\alpha\ge 2k-2$. Furthermore, each $\alpha$
with $\dim\MM_{g,\Delta}^\alpha = 2k-2$ has a most one (by convexity)
value $p_k$ admitting such $h$. There are only finitely many distinct
combinatorial types.
Thus away from a finite set $D\subset\R^2$ we only encounter
combinatorial types~$\alpha$ with $\dim\MM_{g,\Delta}^\alpha > 2k-2$
and they are explicitly described by Proposition 3.9 of \cite{GM}.
\end{proof}

%The combinatorial types of
%the curves of genus $g$ and degree $\Delta$
%through these configurations do not depend
%on the choice of a positive value of~$t$.
%%passing through these configurations for $t>0$ have the same combinatorial type.
%Furthermore, these curves $h(t):C(t)\to\R^2$ vary continuously in $t$ as each connected component
%$C(t)\setminus h^{-1}(t)(\X(t))$ is a 3-valent tree with all ends but one adjacent to a point from $\X(t)$,
%see \cite{Mik05}.
%%Note that the only two connected components that change %are those that are adjacent to $x_k(t)$
%%(they may also coincide). Let $\Gamma(t)$ be one of these components.
%The same holds for curves corresponding to the negative values
%of~$t$.

%When $t=0$ the configuration $\X(t)$ is no longer generic. There are two possibilities for degeneration
%of $C(t)$, see Figure \ref{C-0}. Either one of the 3-valent vertices of $C(t)$ collide with one of the points of $\X(t)$, so that
%$C(0)$ breaks into two connected components.
%Or two 3-valent vertices (the endpoint of the same edge) collide into a 4-valent vertex of $C(0)$.

By Lemma \ref{codim1} we may assume that the path
$\X(t)$, $t\in[-\epsilon,\epsilon]$, is such that for any curve $h(t_0):C(t_0)\to\R^2$ of degree $\Delta$ and genus $g'\le g$
passing through $\X(t)$ we have
%$\deff(C)\ge 2$.
$\deff(C) \leq 2$.
In addition, we have $\deff(C(t_0))=0$ whenever $t_0\neq 0$
and $\deff(C(0))\le 1$ unless $C(0)$ has two 4-valent vertices connected by two edges.

Suppose that $h(\epsilon):C(\epsilon)\to\R^2$ is a curve passing through $\X(\epsilon)$.
When we change $t$ from $\epsilon$ to $0$ the configuration $\X(t)$ moves as well in
the class of generic configurations.
This uniquely defines a continuous deformation $h(t):C(t)\to\R^2$ as by Lemma 4.20 of \cite{Mik05}
every connected component
$T(\epsilon) \subset C(\epsilon) \setminus h^{-1}(\X(\epsilon))$
is a 3-valent tree with a single leaf going to infinity.
All the other leaves of $h(T(\epsilon))$ are adjacent to some points of $\X(\epsilon)$.

Thus one can reconstruct $h(t):C(t)\to\R^2$, $0<t<\epsilon$, by tracing the change of $h(T(\epsilon))$ for each such component $T(\epsilon)$.
We do it inductively. If $T(\epsilon)$ is a tree without 3-valent vertices then $h(T(\epsilon))$ is an open ray adjacent to $p_j(\epsilon)$.
If $j<k$ this point does not move and $T(\epsilon)$ remains constant under the deformation. If $j=k$, then $h(T(\epsilon))$ deforms to a parallel ray
emanating from $p_k(t)$.

Suppose that $T(\epsilon)$ contains 3-valent vertices.
Unless $h(T(\epsilon))$ is adjacent to $p_k(\epsilon)$, it remains constant under deformation
as its endpoints do not move.
Let $E(\epsilon)$ be the edge of $T(\epsilon)$ connecting $p_k(\epsilon)$ to a 3-valent vertex $v\in T(\epsilon)$.
The complement $T(\epsilon)\setminus\{v\}$ consists of three components: the edge $E(\epsilon)$ and two other components
$T_0(\epsilon)$ and $T_\infty(\epsilon)$, where $T_\infty(\epsilon)$ is chosen so that it contains the infinite edge leaf.

Let $E_0(\epsilon)$ be the edge of $T_0(\epsilon)$ adjacent to $v$. The line parallel to $E(\epsilon)$ passing through $p_k(t)$
intersects the line containing $E_0(\epsilon)$ at a point $v(t)$. If $t<\epsilon$ is sufficiently close to $\epsilon$
then $v(t)$ is sufficiently close to $v$. %and, in particular, is distinct from other vertices of $T$.
We form $h(T(t))$ by taking the union of the interval connecting $p_k(t)$ to $v(t)$ and the tree obtained
by modifying $T_0(\epsilon)$ by enlarging or decreasing its leaf edge adjacent to $v(\epsilon)$ so that
$h(t)(T_0(t))$ is adjacent to $v(t)$. Then we modify the component $T_\infty(\epsilon)$ inductively by treating
the vertex $v(\epsilon)$ as the marked endpoint for this tree, see Figure \ref{deform}.

\begin{figure}[h]
\includegraphics[height=35mm]{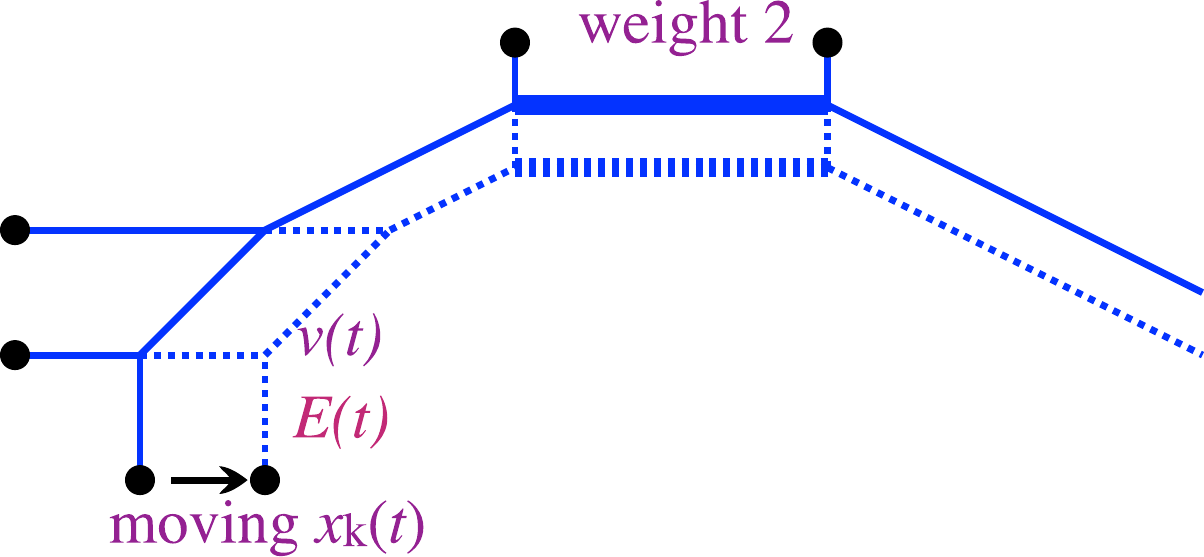}
\caption{\label{deform} Deformation of $T(\epsilon)$}
% Monodromy of an element of $\C(g,\Delta,\X(t))$ under the change of $t\neq 0$.}
\end{figure}

Note that we may continue this deformation $h(t):C(t)\to\R^2$ for any value of $t$, $0<t<\epsilon$.
Indeed, the set of $t\in (0,\epsilon]$ for which such a deformation exists is an open neighborhood of $\epsilon$.
Let $t_{\inf}$ be the infimum of this set. When $t\to t_{\inf}^+$ we get the limiting tree $h(T(t_{\inf}^+))$ for
each component $T(\epsilon)\subset C(\epsilon)\setminus h^{-1}(\X(\epsilon))$. This tree is a degeneration
of the combinatorial type of $T(\epsilon)$ as the length of the edges of $T(\epsilon)$ changes and some
values in the limit $t\to t_{\inf}^+$ might become zero.

Note that if a length of an edge of $T(\epsilon)$
vanishes then either two or more trivalent vertices collide to a vertex of higher valence or one of the
3-valent vertex collides with a point of $X(t_{\inf}^+)$. We may combine a limiting curve
$h(t_{\inf}^+):C(t_{\inf}^+)\to\R^2$ by taking the union of the limiting trees for all such component.
Note that the degree of the limiting curve is still $\Delta$ as the number and direction of the infinite rays
do not change.

\begin{lem}\label{limit-genus}
The genus of $C(t_{\inf}^+)$ is $g$.
\end{lem}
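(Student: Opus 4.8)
The plan is to show that the genus cannot drop when passing to the limit, the only mechanism for a drop being the complete collapse of a cycle, and then to exclude that collapse with the deficiency bound from Lemma~\ref{codim1}. First I would record that along the deformation $h(t)\colon C(t)\to\R^2$, $t\in(t_{\inf},\epsilon]$, the combinatorial type is constant, so every $C(t)$ has genus exactly $g$. The limit $C(t_{\inf}^+)$ is obtained from this constant type by contracting precisely those edges whose length tends to $0$ as $t\to t_{\inf}^+$. Since contracting a subgraph lowers the first Betti number by the first Betti number of that subgraph, the genus $g'$ of $C(t_{\inf}^+)$ satisfies $g'\le g$, with equality exactly when the collapsing edges form a forest. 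Thus it suffices to rule out the total collapse of a cycle $\gamma\subset C(\epsilon)$.

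So I would suppose, for contradiction, that some cycle $\gamma$ collapses, i.e.\ its entire image shrinks to a single point $P\in\R^2$. Two structural facts constrain such a $\gamma$. On one hand, $C(\epsilon)$ is simple, hence it has no two edges sharing both endpoints (their images would coincide along a whole segment, which is incompatible with having only isolated transverse self-intersections); therefore every cycle of $C(\epsilon)$ has at least three edges and three trivalent vertices. On the other hand, by Lemma~4.20 of \cite{Mik05} each connected component of $C(\epsilon)\setminus h^{-1}(\X(\epsilon))$ is a tree, so every cycle must pass through at least one marked point $y_i$. Since $h(t)(y_i)=p_i(t)$ for all $t$, in the limit $y_i$ lies on $\gamma$ and is thus carried to $P$, whence $P=p_i(t_{\inf})\in\X(t_{\inf})$.

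Next I would compute the deficiency of the limit. Collapsing a cycle with $c\ge 3$ edges merges its $c$ trivalent vertices into a single vertex $W$ of valence $c$, so $\val(W)-3=c-3\ge 0$, while all remaining vertices stay trivalent. Moreover $h(W)=P\in\X(t_{\inf})$ contributes $m\ge 1$, and the genus drop gives $g-g'\ge 1$. Hence
$$\deff(C(t_{\inf}^+))\ \ge\ (c-3)+(g-g')+m\ \ge\ (c-3)+1+1\ =\ c-1\ \ge\ 2,$$
with equality forcing $c=3$, that is, a single trivalent vertex sitting on a marked point. This configuration is neither of deficiency at most $1$ nor the configuration of two $4$-valent vertices joined by two edges; since the path $p_k(t)$ is chosen to avoid the finite set $D$, this contradicts Lemma~\ref{codim1}. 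Therefore no cycle collapses, the collapsing edges form a forest, and $g'=g$.

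The main obstacle is the bookkeeping that forces the marked-point term $m\ge 1$: without it a collapsing triangle would only yield $\deff=1$, which Lemma~\ref{codim1} permits, and the argument would break. It is exactly the fact that every cycle must carry a marked point (because the components $T(\epsilon)$ are genus $0$ trees) that upgrades the deficiency from $1$ to $2$ and, together with the absence of bigons in a simple curve, produces the contradiction. An alternative, which I would mention as a cross-check, is that when $t_{\inf}>0$ the configuration $\X(t_{\inf})$ is generic and the genericity hypothesis already forbids any curve of genus $g'<g$ and degree $\Delta$ through it; the deficiency argument above has the advantage of covering the case $t_{\inf}=0$ uniformly.
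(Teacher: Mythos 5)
Your proof is correct and follows the same route as the paper: the paper's own argument is exactly the two-step observation that the genus can only drop if an entire cycle of $C(\epsilon)$ collapses, and that such a collapse is excluded by Lemma~\ref{codim1} because the path of $p_k(t)$ avoids $D$. What you add is the explicit deficiency bookkeeping (a collapsed cycle must carry a marked point since the components of $C(\epsilon)\setminus h^{-1}(\X(\epsilon))$ are trees, forcing $\deff\ge 2$ without producing the exceptional two-$4$-valent-vertex configuration), which the paper leaves implicit; this is a faithful and accurate expansion of the same argument.
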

\begin{proof}
The genus of limiting curve may only decrease if the length of all edges in a cycle of $C(\epsilon)$
will simultaneously vanish. This is not possible by Lemma \ref{codim1} as our path for $p_k(t)$ is chosen
to avoid the set $D$.
\end{proof}

Note that $\deff(C(t_{\inf}^+))$ coincides with the number of the vanishing edges (for all components
of $C(\epsilon) \setminus h^{-1}(\X(\epsilon))$). Thus $t_{\inf}=0$.
Similarly we may deform any curve $h(-\epsilon):C(-\epsilon)\to\R^2$ from $\S(g,\Delta,\X(-\epsilon))$ to a limiting curve
$h(0^-):C(0^-)\to\R^2$.

%Let $h:C\to\R^2$ be a curve of genus $g$
%passing through the configuration
%$\X(0)$.
Theorem \ref{main} now follows from the following Lemma.
\begin{lem}
For each immersed tropical curve $h: C \to \R^2$
such that $\X(0) \subset h(C)$, we have
\begin{equation}\label{g-sum}
\sum G(h^+_j)=\sum G(h^-_j)
\end{equation}
where  $h^{\pm}_j$ %(resp. $h^-_j$)
runs over all curves $\S(g,\Delta,\X(\pm\epsilon))$ such that
the limiting curve $h^{\pm}_j(0^{\pm})$ coincides with~$h$.
\end{lem}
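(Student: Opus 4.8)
The plan is to prove the local invariance \eqref{g-sum} by a case analysis on the combinatorial type of the limiting curve $h:C\to\R^2$ through $\X(0)$. By Lemma~\ref{codim1} (applied after avoiding the finite bad set $D$), the curve $C$ has $\deff(C)\le 2$, and the only way to achieve $\deff(C)=2$ is that $C$ has two $4$-valent vertices joined by two edges. Since $\deff(C(t_0))=0$ for $t_0\neq 0$, the curves $h^\pm_j$ limiting to $h$ are obtained by the explicit tree-deformation procedure described above, tracing how each component $T(\epsilon)\subset C(\epsilon)\setminus h^{-1}(\X(\epsilon))$ collapses. So the task reduces to enumerating, for each admissible degeneration of $C$, the simple curves on the two sides and checking that the Block--G\"ottsche multiplicities $G_h=\prod_V G_V$ balance.

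I would organize the case analysis by the ``excess'' $\deff(C)$. First, when $\deff(C)\le 1$ the limit $h$ either remains simple (nothing happens, both sides have a single matching curve with equal multiplicity), or $C$ has exactly one $4$-valent vertex $V$, or one trivalent vertex of $C$ lies on a point of $\X(0)$. In the $4$-valent case one resolves $V$ into two trivalent vertices; combinatorially there are three ways to split the four incident edges into two pairs, but the balancing and genericity restrict which resolutions appear on the $+$ and $-$ sides. The heart of the verification is the local identity
\begin{equation}\label{localGidentity}
\sum_{\text{resolutions on }+} \prod_{V'}G_{V'} \;=\; \sum_{\text{resolutions on }-}\prod_{V'}G_{V'},
\end{equation}
which factors out the multiplicities of all the untouched vertices (they are common to both sides) and leaves a purely local statement about the dual quadrilateral $\Delta(V)$ and its two triangulations. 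This local statement is exactly a restatement of the Block--G\"ottsche recursion, and I expect it to follow from the elementary identity relating the lattice areas of the two triangulations of a lattice quadrilateral to its total area.

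The genuinely new case, relative to \cite{GM} and \cite{IKS}, is $\deff(C)=2$ with the two $4$-valent vertices $V_1,V_2$ joined by two parallel edges forming a bigon. Here each of $V_1,V_2$ can be resolved in several ways, and as the point $p_k$ crosses $t=0$ the combinatorial picture changes by a more intricate ``flip'' of this bigon configuration. I would match the $+$ and $-$ curves by pairing up the resolutions of $(V_1,V_2)$, again factoring out the common multiplicities of all vertices outside the bigon, and reducing to a local identity among products $G_{V_1'}G_{V_2'}$ indexed by the allowed pairs of triangulations of the two dual quadrilaterals glued along their shared edge. The key arithmetic input is once more Pick's formula together with the defining telescoping form of $G_V$ in \eqref{GV}.

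The main obstacle I anticipate is precisely this last local identity for the bigon: one must show that the two weighted sums of products of $G_V$'s agree, and unlike the single-vertex case this involves a genus-lowering cycle (the bigon carries one of the $g$ cycles of $C$) so the genus bookkeeping from Lemma~\ref{limit-genus} must be threaded through carefully to ensure the two sides really enumerate genus-$g$ curves and not lower-genus degenerations. I would handle it by reducing to the symmetric-function identity satisfied by the polynomials $\frac{y^{m/2}-y^{-m/2}}{y^{1/2}-y^{-1/2}}$, which is the algebraic shadow of the fact that $\mu_\C$ and $\mu_\R$ are already known to be invariant; indeed specializing \eqref{localGidentity} at $y=1$ recovers the complex invariance of \cite{Mik05} and at $y=-1$ the Welschinger invariance of \cite{IKS}, so the polynomial identity interpolates between two already-established identities, which gives strong structural guidance for proving it in full.
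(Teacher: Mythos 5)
Your overall architecture matches the paper's: reduce to a local statement at the degenerate curve $h$, split by $\deff(C)$, factor out the $G_V$'s of the untouched vertices, and verify a local identity on the dual polygon $\Delta(V)$ of the exceptional vertex. But there is a genuine gap at precisely the step that carries the content of the theorem. You state the local identity \eqref{localGidentity} and then assert that it ``is exactly a restatement of the Block--G\"ottsche recursion'' and should follow because its specializations at $y=1$ and $y=-1$ are the known complex and Welschinger invariances. A Laurent polynomial identity is not established by checking it at two points, and there is no ``Block--G\"ottsche recursion'' available to quote here --- this identity \emph{is} what must be proved. Moreover you describe it as an identity between ``two triangulations of a lattice quadrilateral,'' which misses the essential asymmetry: of the three resolutions of a $4$-valent vertex, two are dual to the two diagonal triangulations of $Q$, but the third produces a self-intersection of the curve and is dual to a subdivision of $Q$ into a parallelogram $P$ (contributing trivially to $G_h$) plus two triangles. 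The identity to verify is therefore of the shape $G_{BCD}\,G_{ABD}=G_{ACD}\,G_{ABC}+G_{ADE}\,G_{CDE}$ with $E$ the fourth vertex of $P$; the paper proves it by expanding each product of $\frac{y^{m/2}-y^{-m/2}}{y^{1/2}-y^{-1/2}}$ factors and using the area relations $\Area ACD-\Area ABC=\Area Q-\Area P=\Area ADE+\Area CDE$ and $\Area BCD-\Area ABD=\Area ADE-\Area CDE$. Without this computation (and without the separate subcases where $Q$ is a trapezoid or a triangle, i.e.\ some of $E_1,E_2,E_3,E_\infty$ are parallel, where the set of available resolutions is different), the proof is incomplete.

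Your treatment of $\deff(C)=2$ also heads in the wrong direction. You anticipate ``a more intricate flip'' requiring a new local identity among products $G_{V_1'}G_{V_2'}$ over pairs of triangulations of two glued quadrilaterals. In fact no such identity is needed: since the two edges of the bigon both join $v$ to $v'$, their images are the same segment, so each of the two $4$-valent vertices has a pair of adjacent edges emanating in the same direction; this forces the resolution at each vertex to be combinatorially unique on either side of $t=0$, and the paper concludes by matching the unique curves (after locating the points of $\X(0)$ on $h(E\cup E')$, which is forced by genericity). Your worry that the bigon is a genus-lowering cycle is also misplaced --- the cycle does not collapse, only its vertices become $4$-valent, and Lemma~\ref{limit-genus} already rules out genus drop. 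So the case analysis you would need to complete is simpler than you expect there, but the $4$-valent single-vertex case is harder than you expect, and it is the one you have not actually proved.
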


\begin{proof}
By Lemma \ref{limit-genus} we may assume that the genus of $C$ is $g$ as otherwise
the sums in both sides of \eqref{g-sum} are empty.
By Lemma \ref{codim1} we only need to consider the case when $\deff(C)=1$ and
the exceptional case of $C$ with two 4-valent vertices connected by two edges.

We assume that $h$ can be presented as the limiting
curve of a continuous family $h(t):C(t)\to\R^2$, $0<t<\epsilon$ (by changing the parameter
$t\mapsto-t$ if needed).
First we consider the case when $\deff(C)=1$, $C$ is 3-valent and $m=1$ (see \eqref{deff}).
In this case we have a 3-valent vertex $v\in C$ such that $h(v)=x_j(0)$, for some $j=1,\dots,k$.
Accordingly, the length of the
%edge
segment $E(t)$ connecting $(h(t))^{-1}(p_j(t))$ to a 3-valent vertex $v(t)$
in a component $T(t)\subset C(t)\setminus h^{-1}(\X(t))$ must vanish.

%\in\X(0)$.
%Let $j=1,\dots,k$ be the index such that $h(v)=x_j(0))$.
Let~$A$ be the connected component  of $(C\setminus h^{-1}(\X(0)))\cup\{v\}$
that contains the point $v$.
Note that~$A$ comes as the union of the limits
of the family of components $T(t)$ and
the family of components $T'(t)$ adjacent to $p_j(t)$ from the other side, $0<t\le\epsilon$
(note that $T(t)$ may coincide with $T'(t)$
as $T(t)\cup\{p_j(t)\}$ does not have to be a tree).

Similarly to the situation we have considered above,
the complement $T(t)\setminus\{v(t)\}$ consists of three connected components:
$E(t)$, $T_0(t)$ and $T_\infty(t)$, where $T_\infty$ is the component containing
the edge going to infinity. Again we denote with $E_0(t)$ the edge of $T_0(t)$ adjacent to $p_j(t)$
(and with $E_0(0^+)$ the limit of this edge when $t\to 0^+$).  We denote with $E_\infty(t)$
the edge of $T_\infty(t)$ adjacent to $p_j(t)$. Note that while the length of all these edges as well
as its position in $\R^2$ depend on $t$, their slope remains constant.

Let $L$ be the line extending $E_0(0^+)$.
The points $p_j(t)$, $t>0$ sit in the same half-plane $H$ bounded by $L$ (since $\X(t)$ is tropically
generic whenever $t\neq 0$).
If the points $p_j(t)$, $t<0$, sit in the same half-plane $H$ then we may extend the family
$h(t):C\to\R^2$, $0<t\le\epsilon$,
to $-\epsilon<t<0$ keeping the same combinatorial type by the same reconstruction procedure.

Suppose that $p_j(t)$ sit in the other half-plane for $t<0$ (note that in such case this
holds for all $-\epsilon\le t<0$).
For $t<0$ we define $T_0(t)=E'(t)\cup T_0(0^+)$, where $E'(t)$ is the interval
connecting  $p_j(t)$ to $L$ and parallel to $E_\infty(\epsilon)$, see Figure \ref{3v}.
The remaining components of $S\setminus\{v\}$ (as well as those of $C\setminus(S\cup h^{-1}(\X(0)))$)
are trees without vanishing edges,
so they deform to negative values of $t$ as before.
\begin{figure}[h]
\includegraphics[width=125mm]{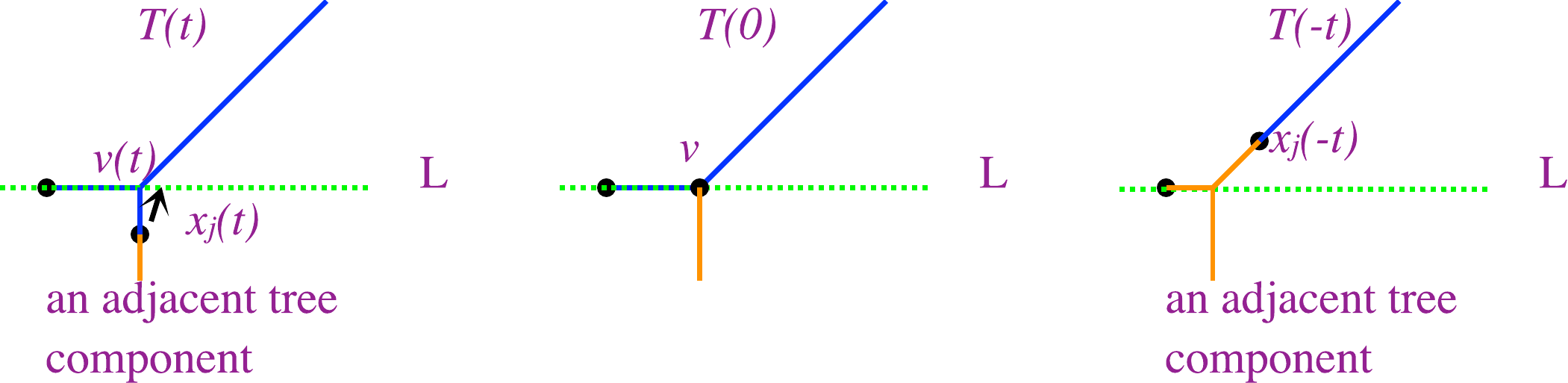}
\caption{\label{3v} Collision of $v(t)$ and $p_j(t)$}
% Monodromy of an element of $\C(g,\Delta,\X(t))$ under the change of $t\neq 0$.}
\end{figure}

This shows that $h$ can be presented
as the limiting curve $h(0^-)$ for a family $h(t)\in\S(g,\Delta,\X(t))$ for $t\in [-\epsilon,0)$.
Its combinatorial type is uniquely determined, so by Lemma 4.22 of \cite{Mik05} the family $h(t)$
is unique and both sums in \eqref{g-sum} consist of a unique term.
These terms have the same multiplicity as the curves $h(t):C(t)\to\R^2$ for $\pm t>0$ have the same
multiplicities for their vertices as the slopes of the corresponding edges are the same
(in fact, the only difference of their combinatorial types is in the edge containing $p_j(t)$).

Let us now consider the case when $\deff(C)=1$, $m=0$ (see \eqref{deff}), and a vertex $v\in C$ is 4-valent.
This corresponds to the case when the length of the edge $E(t)\subset C(t)$ connecting
two vertices $v(t), v'(t)\in C(t)$, $t\in (0,\epsilon]$ vanishes. Consider the component~$A$
of $C\setminus h^{-1}(\X(0))$ containing the vertex $v$. This component %$\deff(C)=1$
is a tree since $\deff(C)=1$ and thus no edges of $C(t)$, $t\in (0,\epsilon]$, except for $E(t)$ may vanish.

Denote the edges of $C$ adjacent to $v$ with $E_1$, $E_2$, $E_3$ and $E_\infty$,
so that the order agrees with the counterclockwise order around $h(v)\in\R^2$ and $E_\infty$ is chosen from the
component of $A \setminus v$
containing an edge going to infinity, see Figure \ref{4v-vertex}.
%First we assume that there are no parallel edges amog $E-j$, $j=1,2,3,\infty$.

%Note that any pair of the edges $E_j$, $j=1,2,3,\infty$, intersect only at $v$.
%Indeed if two edges emanate from $v$ in the same direction there must be another vertex of $C$
%where $h$ is not an immersion locally. But such vertex cannot be 3-valent, as the approximating
%curve $C(t)$ must then have a 3-valent vertex of with the same slope which
%is impossible since $h(t)$, $t\in (0,\epsilon]$, is an immersion. We get a contradiction with
%the assumption $\deff(C)=1$.
\begin{figure}[h]
\includegraphics[width=80mm]{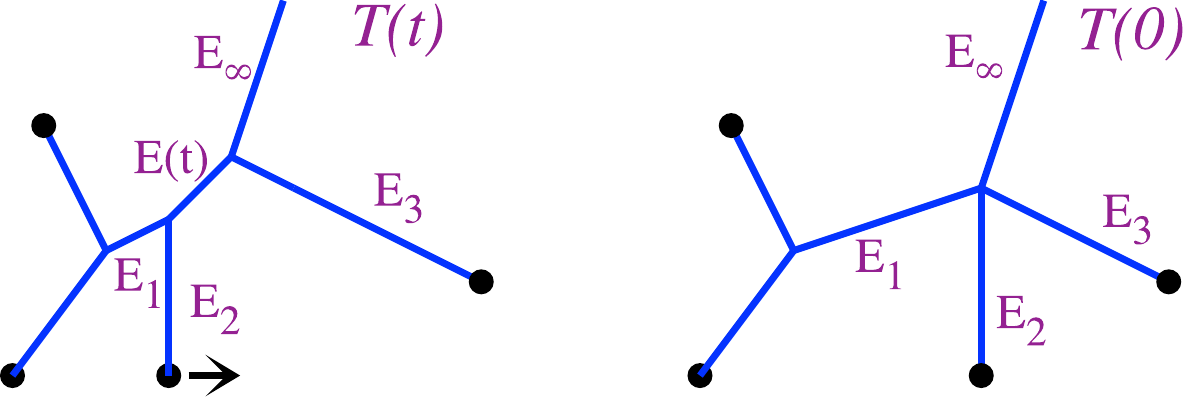}
\caption{\label{4v-vertex} Collision of $v(t)$ and $v'(t)$ to a 4-valent vertex.}
% Monodromy of an element of $\C(g,\Delta,\X(t))$ under the change of $t\neq 0$.}
\end{figure}

Each edge $E_j$ must come as the limit of an edge $E_j(t)$ of the approximating curve $C(t)$.
We denote the endpoint of $E_j$, $j=1,2,3$, not tending to $h(v)$ by $v_j(t)\in C(t)$, $t>0$.
(Note that $E_\infty$ might not have the other endpoint as it might happen to be an unbounded edge.)
The point $h(v_j(t))\in\R^2$ is inductively determined by $\X(t)$ as well as the slopes of the edges of $C$.
These points are thus well-defined also for negative values of $t\in [-\epsilon,\epsilon]$.
Denote with $R_j(t)$, $j=1,2,3$, the rays emanating from the points $h(v_j(t))$ in the direction of the
edges $E_j(t)$. For $t\neq 0$ these edges cannot intersect in a triple point as $\X(t)$ is generic,
but their pairwise intersections must remain close enough to a triple intersection.

If there are no parallel rays among $R_j$ we have one of the two types depicted on Figure \ref{triangles}.
If  the configuration $\X(t)$ for $\pm t>0$ corresponds to the same type of intersection
then the combinatorial types of curves from $\S(g,\Delta,\X(\pm\epsilon))$ coincide
and both sums in \eqref{g-sum} are literally the same. Thus we may assume that we have
different types of intersections for different signs of $t$.
\begin{figure}[h]
\includegraphics[width=100mm]{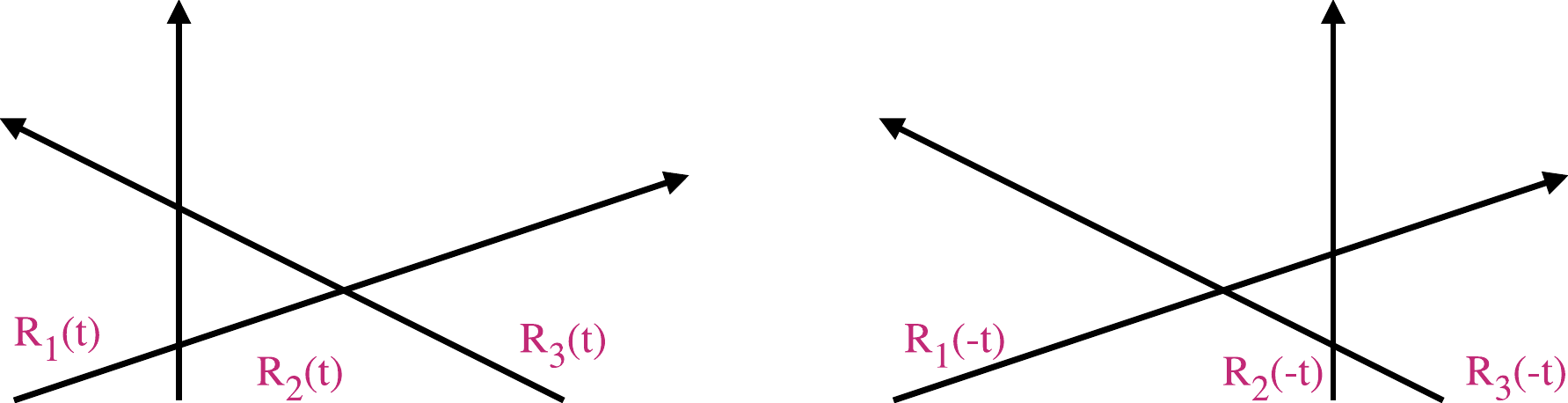}
\caption{\label{triangles} Intersection of rays $R_1(t), R_2(t), R_3(t)$ extending the edges $E_1(t), E_2(t), E_3(t)$.}
% Monodromy of an element of $\C(g,\Delta,\X(t))$ under the change of $t\neq 0$.}
\end{figure}

Possible ways to extend $R_j$ to get a 3-valent perturbation of the neighborhood of the 4-valent
point $v\in C$ are depicted on Figure \ref{res-triangles}. We see that we have three possible types
for such perturbation. Without loss of generality we may assume that two of them
correspond to $t>0$ and one corresponds to $t<0$.
\begin{figure}[h]
\includegraphics[width=100mm]{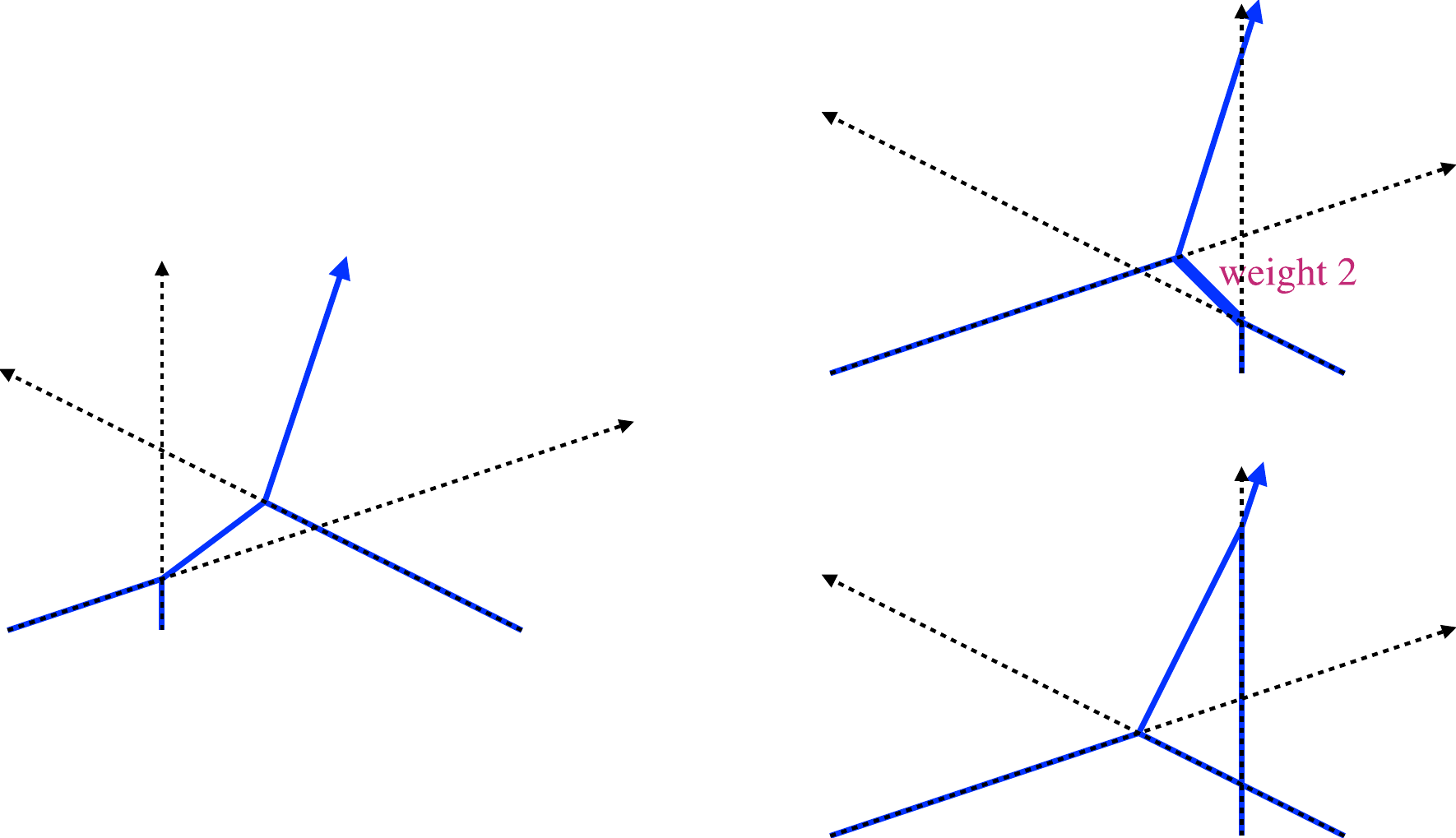}
\caption{\label{res-triangles} Possible ways to perturb a 4-valent vertex}
% Monodromy of an element of $\C(g,\Delta,\X(t))$ under the change of $t\neq 0$.}
\end{figure}

To compare the contribution of these perturbations to the corresponding sides of \eqref{g-sum}
we consider the dual quadrilateral $Q$ to the vertex $v$, see Definition \ref{dual-polygon}.
Each type of perturbation of $C$ where a 4-valent vertex $v$ is replaced by two
trivalent vertices defines a subdivision of $Q$ into two triangles and, possibly, a parallelogram
(which corresponds to the case when there is a self-intersection point of $C(\pm\epsilon)$ near
$h(v)$), cf. section 4.1. of \cite{Mik05}.

The subdivisions dual to the three possible types
of perturbation are shown of Figure \ref{3subdivisions}.
Two of these subdivisions are given by drawing diagonals.
If the quadrilateral $Q$ does not have parallel sides (i.e. no rays $R_j$, $j=1,2,3,\infty$ are parallel)
then the third subdivision maybe described as follows.
There is a unique parallelogram $P$ such that two of the sides of $P$ coincide with two
of the sides of $P$ and $P\subset Q$. The complement $Q\setminus P$ splits into two triangles.
Note that the third triangle corresponds to the same sign of $t$ as the subdivision given by the diagonal of $Q$
that also serves as a diagonal of $P$.
\begin{figure}[h]
\includegraphics[width=80mm]{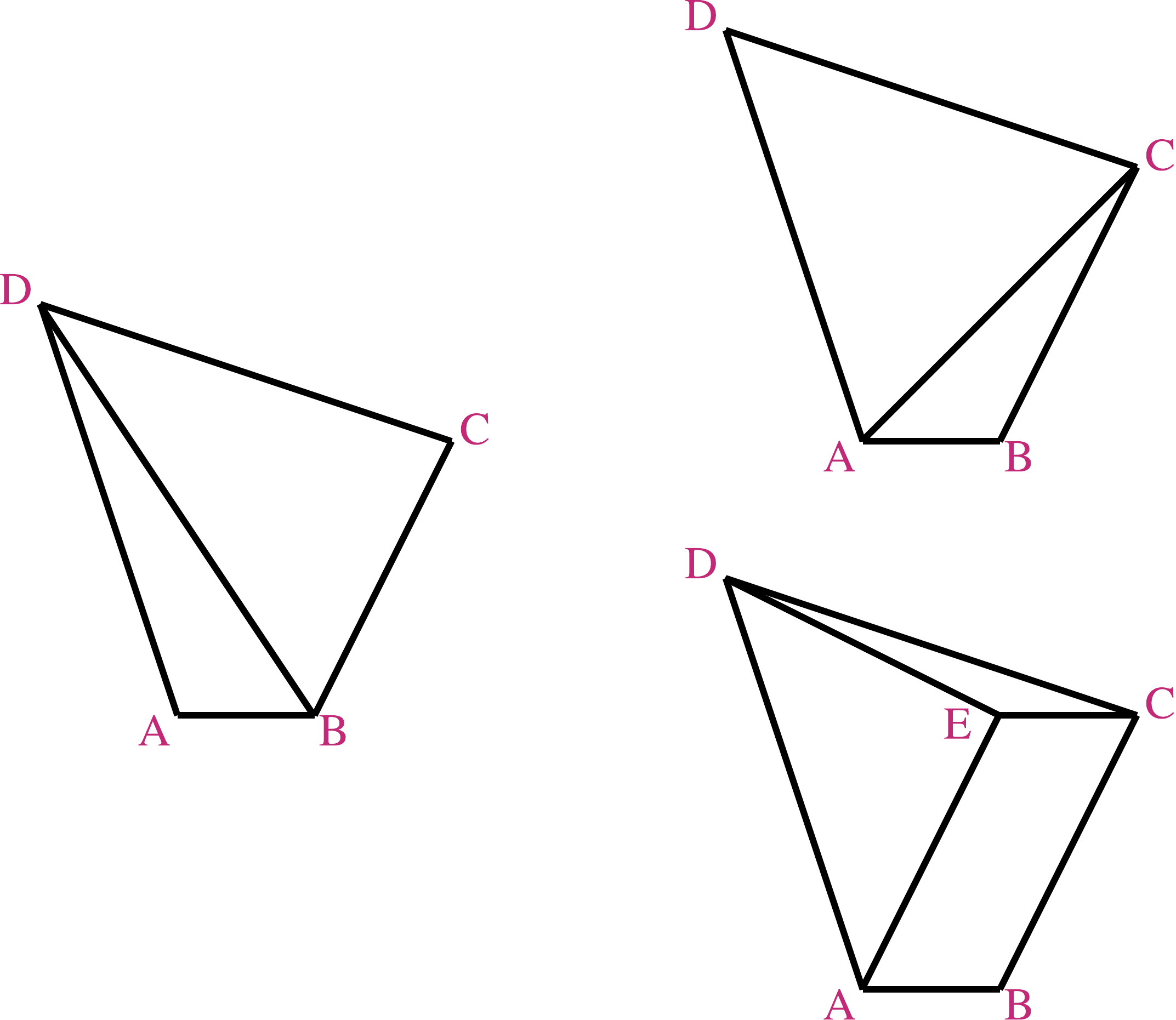}
\caption{\label{3subdivisions} Subdivisions dual to resolving a 4-valent vertex if no adjacent edges are parallel.}
% Monodromy of an element of $\C(g,\Delta,\X(t))$ under the change of $t\neq 0$.}
\end{figure}

We are ready to compute both sides of \eqref{g-sum} for the case when $Q$ does not have parallel sides.
%(i.e. is not a trapezoid).
We denote the vertices of $Q$ by $A, B, C, D$ in the counterclockwise order
so that the sides $AB$ and $BC$ are also the sides of the parallelogram $P$. Let $E$ be the fourth vertex
of $P$. Without loss of generality we may assume that $E$ is inside the (closed) triangle $BCD$
(if $E$ is on the diagonal $AD$ then treat $BDE$ as a degenerate triangle so that $\Area(BDE)=0$).

We have the following straightforward area inequalities: $\Area ACD>\Area ABC$,
$\Area BCD)\ge\Area ABD$ and $\Area ADE\ge\Area CDE$. In the last two inequalities we have
equalities if and only if the triangle $BDE$ is degenerate.

%Suppose that the point $E$ belongs to the diagonal $AD$. Then the areas of

In the right-hand side of \eqref{g-sum} we have a single term proportional to
\begin{eqnarray*}
(y^{\Area BCD}-y^{-\Area BCD})(y^{\Area ABD}-y^{-\Area ABD})=\\
y^{\Area Q}-y^{\Area BCD-\Area ABD}-y^{-\Area BCD+\Area ABD}+
y^{-\Area Q}.
\end{eqnarray*}
The proportionality coefficient here is the product of Block-G\"ottsche multiplicities of all other
vertices of $C$ divided by $(y^{\frac12}-y^{-\frac12})^2$.
The left-hand side of \eqref{g-sum} is the sum of two terms:
\begin{eqnarray*}
(y^{\Area ACD}-y^{-\Area ACD})(y^{\Area ABC}-y^{-\Area ABC})=\\
y^{\Area Q}-y^{\Area ACD-\Area ABC}-y^{-\Area ACD+\Area ABC}+
y^{-\Area Q}
\end{eqnarray*}
and
\begin{eqnarray*}
(y^{\Area ADE}-y^{-\Area ADE})(y^{\Area CDE}-y^{-\Area CDE})=
y^{\Area ADE+\Area CDE}\\
-y^{\Area ADE-\Area CDE}-y^{-\Area ADE+\Area CDE}+
y^{-\Area ADE - \Area CDE}
\end{eqnarray*}
with the same proportionality coefficient.
The terms $y^{\Area Q}+y^{-\Area Q}$ in both sides annihilate.
Furthermore we have a simplification after adding the two terms
of the left-hand side as $$\Area ACD - \Area ABC=\Area Q-\Area P=\Area ADE + \Area CDE.$$

We are left with $-y^{\Area BCD-\Area ABD}-y^{-\Area BCD+\Area ABD}$ in the right-hand side
and with $-y^{\Area ADE-\Area CDE}-y^{-\Area ADE+\Area CDE}$. Thus to finish the proof in the
case when $Q$ is not a trapezoid it suffices to show that $\Area BCD - \Area ABD = \Area ADE - \Area CDE$.
However, since $P=ABCE$ is a parallelogram we have
$$\Area BCD - \Area ADE = \Area P = \Area ABD - \Area CDE.$$

Let us now consider the case when some of the edges $E_1, E_2, E_3, E_\infty$ are parallel.
Note that in this case only two of them may be parallel to the same direction.
Indeed, by the balancing condition if three edges are parallel then the fourth edge must also
be parallel to them. But in this case $v$ is the result of colliding of two 3-valent vertices
of $C(t)$ where the map $h(t)$ cannot be an immersion. Thus either we have exactly two
edges that are parallel or we have two pairs of parallel edges.

Suppose that two edges are not only parallel, but emanate from $v$ in the same direction.
By the balancing condition two other edges cannot be parallel. If one of the two parallel edges is $E_\infty$
then no two rays among $R_1(t), R_2(t), R_3(t)$ are parallel. So, once again we have one of the two
ways to perturb a triple point of intersections of these rays (cf. Figure \ref{triangles}).
If the pair of parallel edges is disjoint from $E_\infty$ then there are still two possibilities for
the rays $R_1(t) R_2(t), R_3(t)$ as the parallel rays may be perturbed in two different ways.

In any of these cases
the dual polygon $Q$ in this case is a triangle. Furthermore any nearby configuration of $R_1(t), R_2(t), R_3(t)$
corresponds to a unique subdivision of the triangle $Q$ into triangles so that the new vertex of the
subdivision is contained in the side dual to the pair of parallel edges  and subdivides them into
the intervals of integer lengths corresponding to the weights of the parallel edges. The only possible difference
is the order of these intervals in $\dd Q$. The unordered pair formed by the areas of the  triangles
of the subdivision is the same, thus the corresponding Block-G\"ottsche multiplicities are also the same. 

If there are no edges among $E_1, E_2, E_3, E_\infty$ emanating in the same direction,
but there are parallel edges then $Q$ is a trapezoid (possibly a parallelogram as we may have two
pairs of parallel edges in this case).
%so that two of the edges adjacent to $v\in C$ are parallel (and emanating
%in the opposite directions).
Then there is a unique way to reconstruct a perturbation
of $h:C\to\R^2$ for each of the two cases of Figure \ref{triangles} as the combinatorial
type of one of the perturbations (the one with a self-intersection point)
has a 3-valent vertex with all three adjacent edges parallel to the same direction.
This combinatorial type cannot be realized by an immersion and thus does not appear
for a generic configuration of points $\X(t)$, $t\neq 0$.

Thus if $Q=ABCD$ is a trapezoid (say $AB$ and $CD$ are parallel sides)
we have the contribution of $y^{\Area Q}-y^{\Area BCD-\Area ABD}-y^{-\Area BCD+\Area ABD}+
y^{-\Area Q}$ and of $y^{\Area Q}-y^{\Area ACD-\Area ABC}-y^{-\Area ACD+\Area ABC}+
y^{-\Area Q}$ on the different side of \eqref{g-sum}. But $\Area BCD=\Area ACD$ while
$\Area ABC=\Area ABD$ since $AB$ and $BC$ are parallel, so the contributions are the same.

Finally we have to consider the case when $\deff(C)=2$. Let $v,v'\in C$ be two 4-valent
vertices connected by two edges $E,E'\subset C$. Note that by Lemma \ref{codim1} the vertices of $C$ are disjoint
from $\X(0)$. We claim that if $h:C\to\R^2$ can be presented as a limiting curve $h(0^\pm)$
for $h(t)\in\S(g,\Delta,\X(t))$, $\pm t>0$, then $h(E\cup E')\cap\X(0)\neq\emptyset$.
Indeed, the union $E\cup E'$ forms a cycle in $C$ and if it is disjoint from $\X(0)$ it must
remain disjoint from $\X(t)$ after a perturbation which contradicts to our hypothesis that $\X(t)$, $t\neq 0$,
is generic by Lemma 4.20 of \cite{Mik05}.

On the other hand the set $h(E\cup E')\cap\X(0)$ cannot have more than two points as each edge
of $C(t)$, $t\neq 0$, can hit no more than one point of $\X(t)$.
If we have two points $p_j(0), p_{j'}(0)\in h(E\cup E')\cap\X(0)$ then they must come from different edges of
the approximating curve, i.e. $p_j(t)\in E(t)$ and $p_{j'}(t)\in E'(t)$, where $E(t),E'(t)\subset C(t)$ are
the edges limiting at $E$ and $E'$.

The (common) endpoints $v(t),v'(t)$ of $E(t)$ and $E'(t')$ belong to two different tree components
$T(t)$ and $T'(t)$ of $C\setminus h(t)^{-1}(\X(t))$.
%The approximating 3-valent tree $T(t)$ (resp. $T(t')$) has one vanishing edge corresponding to $v$ (resp. $v'$).
These trees have one vanishing edge each (corresponding to $v$ and $v'$ respectively).
There is a unique tree approximating $T(0^\pm)$ (resp. $T'(0^\pm)$)
for any generic perturbation of the configuration $\X(0)$. The only
possible difference in the resulting combinatorial type is the exchange of $p_j(t)$ and $p_{j'}(t)$
on $E(t)$ and $E'(t)$. It does not affect the slopes of the edges and thus
the multiplicity of the curves.

If $h(E(t))\ni p_j(t)$ but $h(E'(t))\cap\X(t)=\emptyset$ then  $v(t),v'(t)$ belong to the same component $T(t)$
of $C\setminus h(t)^{-1}(\X(t))$ and this component has two disjoint vanishing edges.
Let $T\subset C$ be the limit of $T(t)$ when $t\to 0^\pm$.
Suppose the unbounded edge of $T$  belongs to the component of $T\setminus \{v,v'\}$ adjacent to $v'$.
%Since the vanishing edges are disjoint we may treat them one by one.

We may treat the vanishing edges one by one.
First we consider the perturbation of the vertex $v$, where the position of
the lines containing the results of perturbation of three out of four adjacent edges (all except for $E'$)
are inductively determined by $\X(t)$ and the slopes of the combinatorial type.
In its turn, the combinatorial type of the perturbation near $v$ is unique as two edges of $C$ adjacent to $v$
emanate in the same direction.

This determines both trivalent vertices that approximate $v$ as well as the line containing $E(t)$.
We proceed with the perturbation of $v'$ in the same way. Once again we get that there is a unique
combinatorial type of $h(t)\in\S(g,\Delta,\X(t))$ approximating $h$ for each generic perturbation $\X(t)$ of $\X(0)$
and it multiplicity does not depend on the choice of perturbation.
\end{proof}

%\begin{proposition}

\bibliography{b}

\begin{thebibliography}{10}

\bibitem{BG}
Florian Block and Lothar G\"ottsche.
\newblock In preparation.

\bibitem{BM}
Erwan Brugall{\'e} and Grigory Mikhalkin.
\newblock Enumeration of curves via floor diagrams.
\newblock {\em C. R. Math. Acad. Sci. Paris}, 345(6):329--334, 2007.

\bibitem{BM2}
Erwan Brugall{\'e} and Grigory Mikhalkin.
\newblock Floor decomposition of tropical curves: the planar case.
\newblock {\em Proceedings of 15th G\"okova Geometry-Topology Conference},
  pages 64--90, 2008.

\bibitem{CH}
L.~Caporaso and J.~Harris.
\newblock Counting plane curves of any genus.
\newblock {\em Invent. Math.}, 131(2):345Ð392, 1998.

\bibitem{DKh}
A.~Degtyarev and V.~Kharlamov.
\newblock Topological properties of real algebraic varieties: Rokhlin's way.
\newblock {\em Russian Math. Surveys}, 55(4):735--814, 2000.

\bibitem{FM}
Sergey Fomin and Grigory Mikhalkin.
\newblock Labeled floor diagrams for plane curves.
\newblock {\em J. Eur. Math. Soc. (JEMS)}, 12(6):1453--1496, 2010.

\bibitem{GM}
Andreas Gathmann and Hannah Markwig.
\newblock The numbers of tropical plane curves through points in general
  position.
\newblock {\em J. Reine Angew. Math.}, 602:155--177, 2007.

\bibitem{GS}
Lothar G\"ottsche and Vivek Shende.
\newblock In preparation.

\bibitem{IKS1}
Ilia Itenberg, Viatcheslav Kharlamov, and Eugenii Shustin.
\newblock Welschinger invariant and enumeration of real rational curves.
\newblock {\em Int. Math. Res. Not.}, (49):2639--2653, 2003.

\bibitem{IKS}
Ilia Itenberg, Viatcheslav Kharlamov, and Eugenii Shustin.
\newblock A {C}aporaso-{H}arris type formula for {W}elschinger invariants of
  real toric del {P}ezzo surfaces.
\newblock {\em Comment. Math. Helv.}, 84(1):87--126, 2009.

\bibitem{KM}
M.~Kontsevich and Yu. Manin.
\newblock Gromov-{W}itten classes, quantum cohomology, and enumerative
  geometry.
\newblock {\em Comm. Math. Phys.}, 164(3):525--562, 1994.

\bibitem{KST}
M.~Kool, V.~Shende, and R.~P. Thomas.
\newblock A short proof of the {G}\"ottsche conjecture.
\newblock {\em Geom. Topol.}, 15:397--406, 2011.

\bibitem{Mik12}
Grigory Mikhalkin.
\newblock Enumeration of real algebraic curves in the plane and tropical
  folding of relevant caustics.
\newblock To appear.

\bibitem{Mi-CRAS}
Grigory Mikhalkin.
\newblock Counting curves via lattice paths in polygons.
\newblock {\em C. R. Math. Acad. Sci. Paris}, 336(8):629--634, 2003.

\bibitem{Mik05}
Grigory Mikhalkin.
\newblock Enumerative tropical algebraic geometry in {$\Bbb R^2$}.
\newblock {\em J. Amer. Math. Soc.}, 18(2):313--377, 2005.

\bibitem{Mik06}
Grigory Mikhalkin.
\newblock Tropical geometry and its applications.
\newblock In {\em International {C}ongress of {M}athematicians. {V}ol. {II}},
  pages 827--852. Eur. Math. Soc., Z\"urich, 2006.

\bibitem{Mik-mfo}
Grigory Mikhalkin.
\newblock Informal discussion: Enumeration of real elliptic curves.
\newblock {\em Oberwolfach Reports}, (20):44--47, 2011.

\bibitem{MZ}
Grigory Mikhalkin and Ilia Zharkov.
\newblock Tropical curves, their {J}acobians and theta functions.
\newblock In {\em Curves and abelian varieties}, volume 465 of {\em Contemp.
  Math.}, pages 203--230. Amer. Math. Soc., Providence, RI, 2008.

\bibitem{W}
Jean-Yves Welschinger.
\newblock Invariants of real symplectic 4-manifolds and lower bounds in real
  enumerative geometry.
\newblock {\em Invent. Math.}, 162(1):195--234, 2005.

\end{thebibliography}
\bibliographystyle{plain}

\end{document}